\documentclass[11pt]{article} 
  \usepackage{amsmath}
    \usepackage{amsthm}
    \usepackage{amssymb}
   \usepackage[dvipsnames]{xcolor}  
   \usepackage{graphicx}
    \usepackage{pdfsync} 
\def\corON{}

\def\corO{}
\def\corD{}
\def\corDB{}
\def\corOF{}
\def\corDBa{}
\def\corDBb{}

\newtheorem{theorem}{Theorem}[section]
\newtheorem{lemma}[theorem]{Lemma}
\newtheorem{corollary}[theorem]{Corollary}
\newtheorem{proposition}[theorem]{Proposition}
\newtheorem{remark}[theorem]{Remark}
 

\setlength{\textwidth}{13cm} 
\newcommand{\eqnsection}{
   \renewcommand{\theequation}{\thesection.\arabic{equation}}
   \makeatletter
   \csname @addtoreset\endcsname{equation}{section} 
   \makeatother}

   \def\nc{}

\def \be{\begin{equation}}
\def \ee{\end{equation}}
\def \bt{\begin{theorem}} 
\def \et{\end{theorem}}
\def \bl{\begin{lemma}} 
\def \el{\end{lemma}}
\def \bea{\begin{eqnarray}}
\def \eea{\end{eqnarray}}
\def \bas{\begin{eqnarray*}}
\def \eas{\end{eqnarray*}}
\def \hit{\corDB{{\tau}}}


\renewcommand{\eqref}[1]{(\ref{#1})} 


\def \al{\alpha}
\def \bb{\beta}
\def \ga{\gamma}

\def \de{\delta}

\def \la{\lambda}

\def \si{\sigma}



\def \ff{\infty}
\def \wh{\widehat}
\def \wt{\widetilde}

\def \FF{{\cal F}}

\def \ll{{\mathbf L}}

\def\b1{\mathbf 1}

\def \({\left(}
\def \){\right)}

\def \lc{\left\{}
\def \rc{\right\}}

\def \nn{\nonumber}

\def \bc{\begin{center} }
\def \ec{\end{center} }
\def \bs{\begin{slide} }
\def \es{\end{slide} }

\def\square{{\vcenter{\vbox{\hrule height.3pt
        \hbox{\vrule width.3pt height5pt \kern5pt
           \vrule width.3pt}
        \hrule height.3pt}}}}
\def\qed{{\hfill $\square$ \bigskip}}

\eqnsection

\begin{document}
	
\title{Barrier estimates for a critical Galton--Watson process and the cover time
of the binary tree}

\author{David Belius, Jay Rosen\footnote{Partially supported by grants from the
NSF and from the Simons foundation}, Ofer Zeitouni
\footnote{Partially supported by
the ERC advanced grant LogCorrelatedFields}}
\maketitle
\begin{abstract}
  \corON{
For the
  critical Galton--Watson process with
  geometric offspring distributions we provide sharp barrier estimates  for barriers which are (small) 
  perturbations of linear barriers. These are useful in analyzing the cover
  time of finite  graphs in the critical regime by random walk, and the 
  Brownian cover times of 
  compact two dimensional manifolds.  As an application
  of the barrier estimates, we prove that if $C_L$ denotes 
  the cover time of the binary tree
  of depth $L$ by simple  walk, then
  $\sqrt{C_L/2^{L+1}} -\sqrt{2\log 2} L+\log L/\sqrt{2\log 2}$ is tight. The
  latter improves results of Aldous (1991),
  Bramson and Zeitouni (2009) and 
  Ding and Zeitouni (2012). In a subsequent 
article we use these barrier estimates to prove tightness of the Brownian cover time 
for the two-dimensional sphere.
}
\end{abstract}
\section{Introduction and statement of main results}

Let $P_{n}$ be the law of the critical Galton--Watson process $\left(T_{l}\right)_{l\ge0}$
with inital population $T_{0}=n$ and geometric offspring distribution.
For any $a,b,L$ let
\[
f_{a, b}\left(l;L\right)=a +(b-a)\frac{l}{L},
\]
be the line interpolating $a$ and $b$ over the interval $\left[0,L\right]$.
Abbreviate $l_{L}=l\wedge\left(L-l\right)$. Let $\mathbb{N}=\left\{ 1,2,\ldots\right\} $ \corDB{and $\mathbb{Z}^+ = \left\{0,1,2,\ldots \right\}$}. \corON{For $y,\delta\geq 0$, let $H_{y,\delta}=[y,y+\delta]$ and
set $H_y=H_{y,1}$.} \corDB{The main result of this article is the following barrier estimates for the process $T_l$.}



\begin{figure}\label{fig: barrier events UB}
	\centering
	\includegraphics[width=0.7\textwidth]{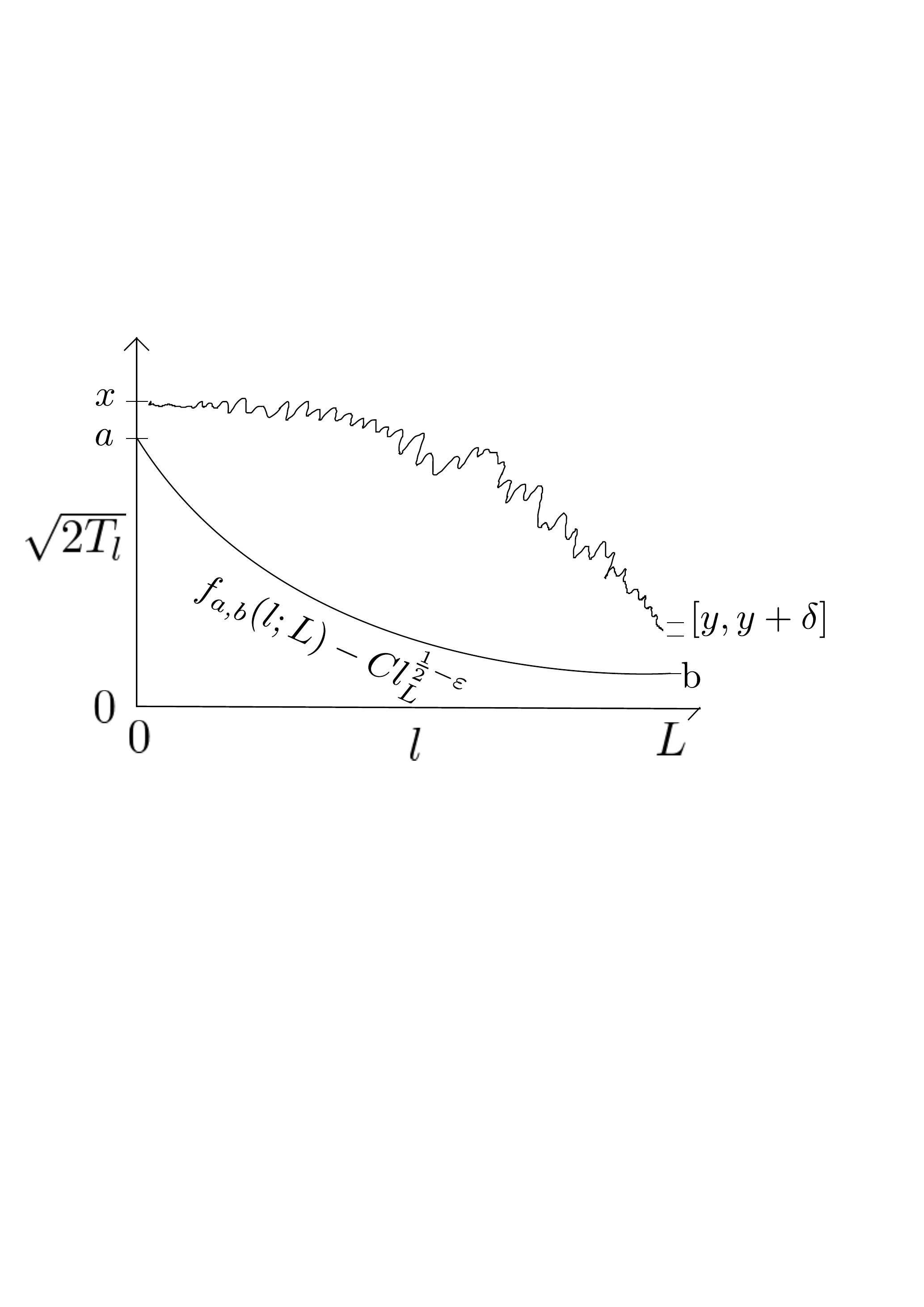}
	\vspace{-2in}
	\caption{Illustration of barrier event in Theorem \ref{thm: GW Barrier} a).}
\end{figure}

\begin{figure}\label{fig: barrier events LB}
	\centering
	\includegraphics[width=0.7\textwidth]{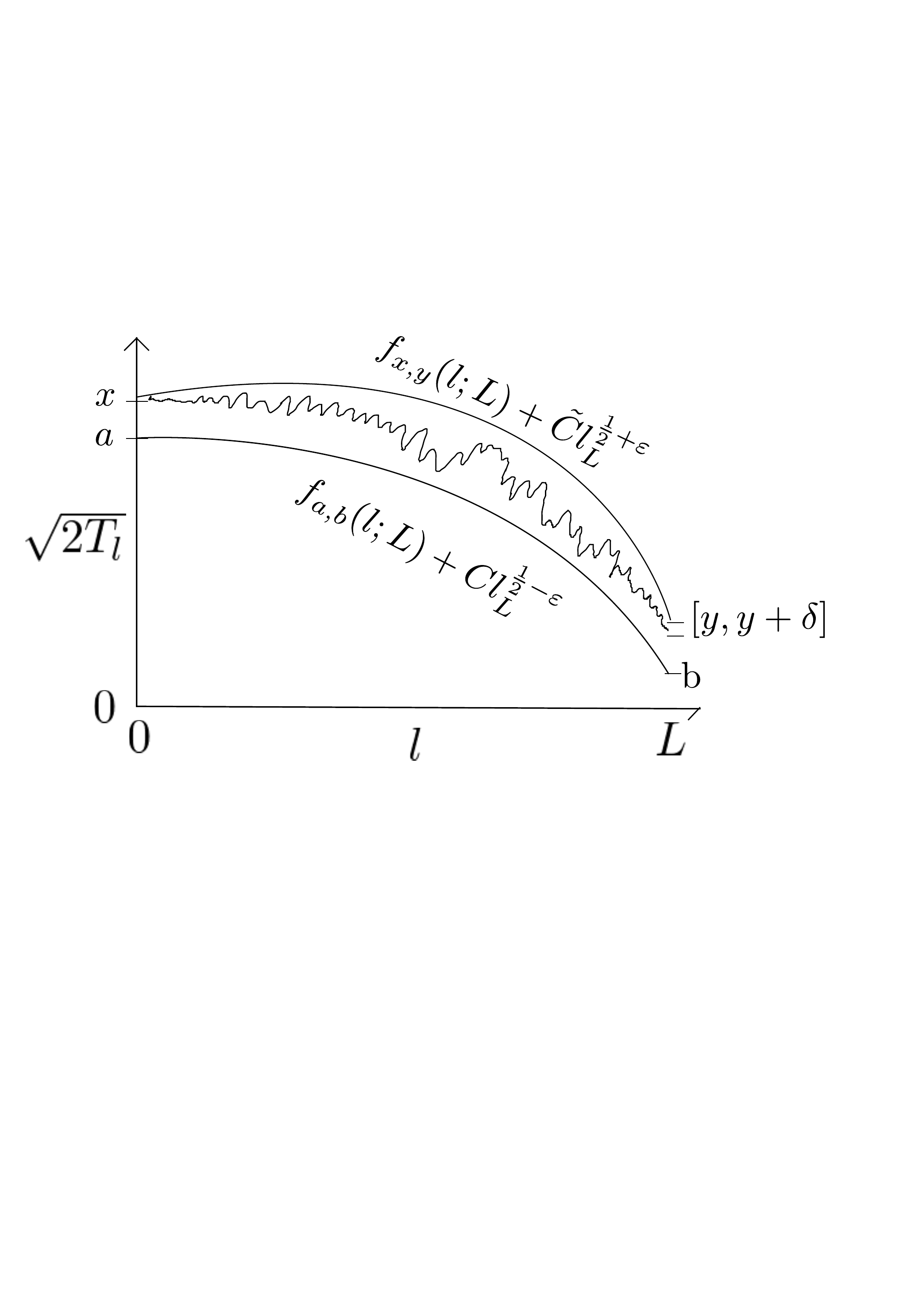}
	\vspace{-2in}
	\caption{Illustration of barrier event in Theorem \ref{thm: GW Barrier} b).}
\end{figure}

\bt
\label{thm: GW Barrier}
\corON{a)} For all fixed $\delta, C>0$, \corON{$\eta>1$} and 
$\varepsilon\in\left(0,\frac{1}{2}\right)$ we have, \corON{
uniformly
in} $\sqrt{2}\le x, y\le \eta L$ 
\corON{
such that ${x^{2}}/{2}\in\mathbb{N}$, any $0\le a\le x, \,\,0\le b\le y$,}
that
\begin{eqnarray}
&&P_{x^{2}/2}\left(f_{a, b}\left(l;L\right)-Cl_{L}^{\frac{1}{2}-\varepsilon}\le\sqrt{2T_{l}},l=1,\ldots,L-1,\sqrt{2T_{L}}\in
  \corON{H_{y,\delta}}\right) \nonumber\\
&&\;\le c\frac{\left(1+x-a\right)\left(1+y-b\right)}{L} \sqrt{
  \corON{\frac{x}{yL}}}e^{-\frac{\left(x-y\right)^{2}}{2L}}.
\label{eq: GW curved barrier upper bound}
\end{eqnarray}
If in addition $a\geq {L}/{\eta}$, similar bounds hold for $y\leq \sqrt{2}$,
with \corON{$\sqrt{\frac{x}{yL}}$} replaced by $1$.

\corON{b)} \nc For any  $\tilde{C}\geq 2C+2\delta+\eta+ \sqrt{2} $, if, \corON{in addition to the conditions in part a),} 
we also have  $\left(1+x-a\right)\left(1+y-b\right)\le \eta L$, $\max (ab, |a-b|)\geq  L/\eta$
and $\left[y,y+\delta\right]\cap\sqrt{2\mathbb{Z}^+}\ne\emptyset$ then
\begin{eqnarray*}
&&P_{x^{2}/2}\left(\begin{array}{r}
f_{a, b}\left(l;L\right)+Cl_{L}^{\frac{1}{2}-\varepsilon}\le\sqrt{2T_{l}}\le f_{x, y}\left(l;L\right)+\tilde{C}l_{L}^{\frac{1}{2}+\varepsilon},l=1,\ldots,L-1,
\nonumber \\
\sqrt{2T_{L}}\in \corON{H_{y,\delta}}\nonumber \\
\end{array}\right)\\
&& \;\ge c\frac{\left(1+x-a\right)\left(1+y-b\right)}{L}\times\left(\sqrt{
  \corON{\frac{x}{yL}}}\wedge1\right)e^{-\frac{\left(x-y\right)^{2}}{2L}},
\label{eq: gw lower bound}
\end{eqnarray*}
\corON{and the estimate is uniform in such $x,y,a,b$ and all $L$.}
\et

%
\begin{remark}
\corDBb{a)} In this paper constants, 
\corON{whose value may change from occurence to occurence,} that depend 
at most on \corON{$C,\delta,\eta$} and $\varepsilon$,
are denoted by $c$. The notation $a\asymp b$ means that $a\le c\cdot b$
and $b\le c\cdot a$.\\
\corDBb{b)} When $\delta<\sqrt{2}$   and $y=0$ the terminal condition $\sqrt{2T_{L}}\in
\corON{y\in H_{y,\delta}}$
is equivalent to $T_{L}=0$. \corDBb{Less precise barrier estimates for the process $T_l$ conditioned on $T_L=0$ appear in \cite[Proposition 7.1]{BK}.}
\end{remark}
 \corOF{We will put Theorem \ref{thm: GW Barrier} in the context of 
estimates for Bessel processes, see Proposition 
\ref{prop: Almost LCLT} below.} 
Before doing so, we emphasize that 
the main application of Theorem \ref{thm: GW Barrier} is in an upcoming
paper by the authors which proves tightness of the (centered) square-root of
the Brownian
cover 
time of the two dimensional 
sphere. In the current paper, we illustrate the use
of Theorem \ref{thm: GW Barrier} by \corOF{presenting a}
quick proof of a  similar result for
the cover time of the \corO{binary}
tree. Let $\mathcal{T}_{L}$ be the tree of
depth $L$, with a root of degree one attached to the top. \corO{(Formally,
begin with a binary tree rooted at vertex \corDB{$o$}, and attach to it a vertex \corDB{$\rho$},
\corON{the root of $\mathcal{T}_L$}, connected by an edge to \corDB{$o$}.)}
\corOF{The tree $\mathcal{T}_L$}
has $2^{L+1}$ vertices and \corON{$2^{L}+1$} leaves \corDBa{(including $\rho$)}. Let $\mathbb{P}$ be
the law of discrete time simple random walk $\left(X_{n}\right)_{n\ge0}$ on $\mathcal{T}_{L}$ starting at the root. Let $\hit_{y},y\in\mathcal{T}_{L}$,
be the hitting time of the vertex $y$ by the random walk. 
The cover time
\[
C_{L}=\max_{y\in\mathcal{T}_{L}}\hit_{y}=\max_{y\in\mathcal{T}_{L},y\text{ is a leaf}}\hit_{y}
\]
is the first time the random walk has visited every vertex of $\mathcal{T}_{L}$.
We prove the following estimate.
 
\bt
\label{thm: tree cover main} \corDB{There exist constants $c$ such that }for all $x>0$,
\begin{equation}
	\limsup_{L\to\infty}\mathbb{P}\left(\sqrt{\frac{C_{L}}{2^{L+1}}}\ge\sqrt{2\log2}\,\,L-\frac{1}{\sqrt{2\log2}}\log L+x\right)\le cxe^{-x \sqrt{2\log2}},\label{eq: upper tail bound}
\end{equation}
\begin{equation}
	\liminf_{L\to\infty}\mathbb{P}\left(\sqrt{\frac{C_{L}}{2^{L+1}}}\ge\sqrt{2\log2}\,\,L-\frac{1}{\sqrt{2\log2}}\log L+x\right) \ge cxe^{- x \sqrt{2\log2}},\label{eq: upper tail lower bound}
\end{equation}
and
\begin{equation}
\limsup_{L\to\infty}\mathbb{P}\left(\sqrt{\frac{C_{L}}{2^{L+1}}}\le\sqrt{2\log2}\,\,L-\frac{1}{\sqrt{2\log2}}\log L-x\right)\le e^{-cx}.\label{eq: lower tail bound}
\end{equation}
\et
In particular, \corOF{Theorem \ref{thm: tree cover main} shows that} 
\begin{equation}\label{eq: tightness informal}
\sqrt{\frac{C_{L}}{2^{L+1}}}=\sqrt{2\log2}\,\,L-\frac{1}{\sqrt{2\log2}}\log L+O\left(1\right),
\end{equation}
\corOF{that is, tightness of a centered, scaled version of the square root of the
cover time.}
\corOF{
Equivalently one could state tightness directly in terms of a centered and scaled version of the cover time itself, as}
$$
\corOF{\frac{C_L}{2^{L+1} L} =  2 \left( \log 2\cdot  L - \log L + O(1)
\right),}
$$
and the corresponding tail bounds can also be written in a similar way.
\corDB{The statement
  \eqref{eq: tightness informal}} improves on the estimate from \cite{DingZeitouni-ASharpEstimateForCoverTimesOnBinaryTrees},
which has $O\left(\left(\log\log L\right)^{8}\right)$ in place of
$O\left(1\right)$. \corON{(Earlier results of Aldous \cite{Aldous} give 
the leading order\\ $\sqrt{2\log 2}\, L(1+o(1))$.)} 
\corON{Theorem \ref{thm: tree cover main}}
also provides a new proof that after appropriate
centering, $\sqrt{C_{L}/2^{L+1}}$ is tight, a result proven in 
\cite{BramsonZeitouni}
using a recursion (in a way that avoids computing the centering
term).


 \corDBa{In light of recent works 
   \cite{DingZeitouni-ASharpEstimateForCoverTimesOnBinaryTrees,BK}, it appears 
   that cover times of homogeneous trees and of two dimensional graphs 
   or manifolds are related to the extrema \corOF{of certain critical hierarchical 
   random fields},
   a universality class \corOF{which contains logarithmically 
   correlated fields and branching random walks}\footnote{
   \corDBb{More generally, isomorphism theorems have been used to show that for 
   any finite connected graph \corOF{for which hitting times are asymptotically
   shorter than cover times}, the \corOF{
   cover time }
   divided by the number of edges is of the same order 
   as the \corOF{square of the} maximum of the Gaussian Free Field on the 
   same graph \cite{DLP}; for arbitrary trees  or for bounded degree graphs,
   they match 
   to leading order \cite{Ding}.}}.
   The logarithmic correction term in \eqref{eq: tightness informal}, whose 
   form was proven 
   in \cite{DingZeitouni-ASharpEstimateForCoverTimesOnBinaryTrees}, is, up to constant multiple,  
   the universal correction term\footnote{\corDBb{The constant multiple is determined by tail estimates.
   		Differing tail estimates for local times and related Gaussian fields
   		lead to differing constants for cover times and the maximum of
   		such Gaussian fields, as explained in \cite[Section 1.2]{BK}.}} for fields in \corOF{this}
 universality class.
%
 That the minimum (or maximum) is tight after centering by the leading term together with this logarithmic  correction term is another \corOF{conjectured}
 universal feature of these fields, as is the decay $x e^{-cx}$, for some $c$, of the right tail, which we verify in \eqref{eq: upper tail bound} and \eqref{eq: upper tail lower bound}. \corDBa{(By contrast, the left tail for which we have only the rough bound \eqref{eq: lower tail bound}, is not \corOF{expected to be
 universal, and in general is not even of exponential form.)}} Our approach builds on previous works on branching random
walk \cite{Bramson1ConvergenceofSolutionsOfKolmogorovEqn}, \corON{\cite{ABR}},
\cite{ABKGenealogy},
\corON{\cite{Aidekon},}
\cite{BramsonDingZeitouni-ConvergenceinLawOfTheMaxOfNonLAtticeBRW}
and cover times \cite{DemboPeresEtAl-CoverTimesforBMandRWin2D}, \cite{DingZeitouni-ASharpEstimateForCoverTimesOnBinaryTrees},
\cite{BK}. More precisely, we use a second moment method
with a truncation involving the process of
\corDBb{discrete} edge local times staying
above certain barriers. The barrier estimates that are the main results
of this paper are a crucial technical input.

\corDBb{The main step of the proof of the cover time result is the analysis of the {\it minimum} of the discrete local times among the leaves at the time a certain local time is reached at the root. For the {\it maximum} of continuous local times on the leaves more precise results have been obtained \cite{Abe}.}

\corON{The intuition behind Theorem
  \ref{thm: GW Barrier} is that
the process $l \to \sqrt{2 T_l}$ behaves like a Bessel-0 process,
\corDB{i.e. a process $Y_t$ which satisfies the SDE
\begin{equation}\label{eq: bessel SDE}
	dY_{t}=dW_{t}-\frac{1}{2Y_{t}}dt,
\end{equation}
for a Brownian motion $W_t$ (the drift is only significant if 
$Y_t$ is close to zero,
otherwise the process $Y_t$ 
behaves like a Brownian motion).} The Galton--Watson process $T_l$
can be thought of as 
a discrete version of a squared Bessel-0 process.
\corDB{Indeed}, if 
$f_{l,j},j \ge 1,$ are the number of offspring
of each individual in generation $l$, letting $g_{l,j}=f_{l,j}-1$
we can also write $\sqrt{2 T_{l+1}}$ as
\begin{equation}
\sqrt{2 T_{l+1}} =  \sqrt{2 T_l}  \sqrt{1+\frac{1}{T_l}\sum_{j=1}^{T_l}
g_{l,j}}.
\end{equation}
When $T_l$ is large one can Taylor expand the square root to obtain that
the increment
$\sqrt{2 T_{l+1}} - \sqrt{2 T_{l}}$ equals
\begin{equation}\label{eq: sum}
	\frac{1}{\sqrt{2T_l}}\sum_{j=1}^{T_l}g_{l,j} - \corON{\frac{1}{2 
	  \sqrt{2 T_l}} }
\end{equation}
plus terms that can be shown to be negligible.
The distribution of the normalized sum in \eqref{eq: sum} will be close to Gaussian by the central limit theorem (the $g_{l,j}$ are independent, have mean zero, and are independent of $T_l$). \corDB{Thus if we let $Z_l = \sqrt{2 T_l}$ we can informally write
$$
Z_{l+1} - Z_l \approx N_l - \frac{1}{2 Z_l },
$$
for approximately Gaussian independent $N_l$, making the heuristic link to \eqref{eq: bessel SDE} apparent.}
A proof of \corON{results similar to} Theorem 
\ref{thm: GW Barrier}, that could be extended to other Galton--Watson processes,
could be provided\footnote{and is available from the authors} by following these ideas. Instead, in this paper, we use exact equalities in law to provide 
a shorter proof.
}

\corON{ Theorem  \ref{thm: GW Barrier}}
should be understood in the light of the following precise
large deviation estimate for $\sqrt{2T_{L}}$.

\begin{proposition}
  \label{prop: Almost LCLT}For all fixed $\delta>0$ \corON{and $\eta>1$}
  we have uniformly
in $\sqrt{2}\le x,y\le
\corON{\eta L}$ such that $x^{2}/2$ is an
integer that
\begin{equation}
  P_{x^{2}/2}\left(\sqrt{2T_{L}}\in \corON{H_{y,\delta}}
  \right)\leq c\sqrt{\corON{\frac{x}{yL}}}e^{-\frac{\left(x-y\right)^{2}}{2L}}.\label{eq: Almost LCLT}
\end{equation}
If, in addition, \corDB{$\left[y,y+\delta\right]\cap\sqrt{2\mathbb{Z}^+}\ne\emptyset$} and $\corON{ L/\eta}\leq xy $, then the corresponding lower  bound
\corDB{
\begin{equation}
P_{x^{2}/2}\left(\sqrt{2T_{L}}\in H_{y,\delta}
\right) \geq
c\sqrt{\frac{x}{yL}}e^{-\frac{\left(x-y\right)^{2}}{2L}},
\label{eq: Almost LCLT LB}
\end{equation}
}
also holds.
Also, for any $0<x\le\corON{\eta L}$,
\begin{equation}
P_{x^{2}/2}\left(\sqrt{2T_{L}}=0\right)\asymp e^{-\frac{x^{2}}{2L}}.\label{eq: LCLT zero}
\end{equation}
\end{proposition}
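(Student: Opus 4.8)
The plan is to reduce everything to the exact law of $T_L$, which for geometric mean-one offspring is explicit. The offspring generating function $\varphi(s)=1/(2-s)$ is a parabolic M\"obius transformation fixing $1$, so conjugating by $s\mapsto 1/(1-s)$ turns iteration into a unit translation and yields
\[
\varphi_L(s)=\frac{L-(L-1)s}{(L+1)-Ls}=\frac{L-1}{L}+\frac1L\cdot\frac{p}{1-(1-p)s},\qquad p:=\frac{1}{L+1}.
\]
In probabilistic terms, started from $n$ founders, $T_L\stackrel{law}{=}\sum_{i=1}^N G_i$, where $N\sim\mathrm{Bin}(n,1/L)$ is independent of i.i.d.\ geometric variables $G_i$ on $\{0,1,2,\dots\}$ with $P(G_i=k)=p(1-p)^k$; in particular $P_n(T_L=0)=(L/(L+1))^n$ and, for $m\ge1$,
\[
P_n(T_L=m)=\sum_{j\ge1}\binom nj\Big(\frac1L\Big)^j\Big(1-\frac1L\Big)^{n-j}\binom{m+j-1}{j-1}p^j(1-p)^m.
\]
Estimate \eqref{eq: LCLT zero} then follows at once from $P_{x^2/2}(T_L=0)=(1-\tfrac1{L+1})^{x^2/2}$: for $0<x\le\eta L$ the higher-order terms in $\log(1-\tfrac1{L+1})$ and the passage from $L+1$ to $L$ change this quantity by at most a factor depending on $\eta$, so it is $\asymp e^{-x^2/(2L)}$.

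For \eqref{eq: Almost LCLT} and \eqref{eq: Almost LCLT LB} I would write
\[
P_{x^2/2}(\sqrt{2T_L}\in H_{y,\delta})=\sum_{y^2/2\le m\le(y+\delta)^2/2}P_{x^2/2}(T_L=m),
\]
and estimate each summand by a one-dimensional Laplace (saddle-point) analysis of its $j$-sum, using Stirling bounds on the three binomial coefficients. With $n=x^2/2$, the exponent of the $j$-th term has an interior maximizer $j^\ast$ at which it equals $-(\sqrt n-\sqrt m)^2/L=-(x-\sqrt{2m})^2/(2L)$, and this is $-(x-y)^2/(2L)+O_{\delta,\eta}(1)$ uniformly over the window, since $-(x-z)^2/(2L)$ varies by at most $\delta(x+y+\delta)/L=O_{\delta,\eta}(1)$ as $z=\sqrt{2m}$ ranges over $[y,y+\delta]$ (this is where $x,y\le\eta L$ enters). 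One checks that $j^\ast\gtrsim1$ exactly when $xy\gtrsim L$. In that regime the Gaussian fluctuation about $j^\ast$ together with the leftover Stirling prefactors gives, per value of $m$, an estimate $\asymp\sqrt{x/(y^3L)}\,e^{-(x-y)^2/(2L)}$; summing over the $\asymp\delta y$ admissible $m$ in the window (only boundedly many when $y$ is of order one, but then $y^3$ and $y$ are comparable up to a $\delta$-dependent constant, which is harmless) produces the sharp two-sided bound $\asymp\sqrt{x/(yL)}\,e^{-(x-y)^2/(2L)}$, establishing both \eqref{eq: Almost LCLT} and \eqref{eq: Almost LCLT LB}. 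The hypotheses of the lower bound are precisely what is needed here: $[y,y+\delta]\cap\sqrt{2\mathbb Z^+}\ne\emptyset$ makes the window of $m$'s nonempty, and $L/\eta\le xy$ puts us in the regime $j^\ast\gtrsim1$ where the matching lower bound is available. In the complementary regime $xy\lesssim L$, where only \eqref{eq: Almost LCLT} is claimed, the $j$-sum is dominated by its $j=1$ term, giving $P_{x^2/2}(\sqrt{2T_L}\in H_{y,\delta})\lesssim\delta y\,(x^2/L^2)\,e^{-(x-y)^2/(2L)}$, which is $\lesssim\sqrt{x/(yL)}\,e^{-(x-y)^2/(2L)}$ because $\delta(xy/L)^{3/2}=O_{\delta,\eta}(1)$.

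The main difficulty is making the Laplace/Stirling estimate genuinely uniform over the whole range $\sqrt2\le x,y\le\eta L$ and all $L$: the ratios $j^\ast/n$ and $j^\ast/m$ need not be small, so one cannot simplify Stirling by assuming $j\ll n$ or $j\ll m$, and the full expansion must be carried through and bounded by hand (two-sidedly for the sharp estimate, one-sidedly for \eqref{eq: Almost LCLT}). A secondary nuisance is that the quantity of interest is the probability of the interval $H_{y,\delta}$ rather than a pointwise mass, so the heuristic ``there are $\asymp\delta y$ values of $m$ in the window'' must be made honest when $y=O(1)$, and for the lower bound one has to isolate a single well-chosen $m$ using the lattice hypothesis. (Conceptually, $\varphi_L(e^{-\mu})^n$ is the exact generating function, and its continuum limit $e^{-n\mu/(1+L\mu)}$ is the Laplace transform of one half of a squared Bessel$(0)$ process run for time $L$ from $x^2$; the crossover between $xy/L\lesssim1$ and $xy/L\gtrsim1$ is exactly that of the modified Bessel function $I_1$ between its $u/2$ and $e^{u}/\sqrt{2\pi u}$ behaviours, which again explains the role of the assumption $L/\eta\le xy$ in the lower bound. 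Since this is only asymptotic, the self-contained argument stays with the explicit formula and Stirling.)
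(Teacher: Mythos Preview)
Your approach is correct and genuinely different from the paper's. You work directly with the explicit linear-fractional structure of the geometric Galton--Watson law: the iterate $\varphi_L$, the compound representation $T_L=\sum_{i\le N}G_i$ with $N\sim\mathrm{Bin}(n,1/L)$ and geometric $G_i$, and then a Laplace/Stirling analysis of the resulting double sum. The claimed saddle exponent $-(\sqrt n-\sqrt m)^2/L$ and the crossover at $xy\asymp L$ are right, and your handling of \eqref{eq: LCLT zero} via $(1-\tfrac1{L+1})^{x^2/2}$ is essentially identical to the paper's (they phrase it as the probability that $x^2/2$ independent simple-random-walk excursions fail to reach level $L+1$, which is the same formula).

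The paper, by contrast, never touches the explicit pmf of $T_L$. It builds in Section~3 a coupling on $\mathbb Z^+$ in which the traversal counts $T_l$ and the continuous local times $\mathcal L_l$ form an interlaced Markov chain, and identifies $(\mathcal L_l)_l$ exactly as a time-$1$-sampled $\tfrac12\mathrm{BESQ}^0$. One-step estimates (Poisson/Gamma tails and CLT-type lower bounds) then give the sandwich
\[
c\,\mathbb Q^{x^2/2}_1\big(\sqrt{2\mathcal L_L}\in H_{y,\delta/2}\big)\ \le\ \mathbb Q^{x^2/2}_1\big(\sqrt{2T_L}\in H_{y,\delta}\big)\ \le\ c^{-1}\,\mathbb Q^{x^2/2}_1\big(\sqrt{2\mathcal L_{L+1}}\in H_{y,2\delta}\big),
\]
and the outer probabilities are read off from the exact Bessel marginal $\frac{x}{L}e^{-(x^2+z^2)/(2L)}I_1(xz/L)\,dz$ together with the standard asymptotics $I_1(u)\sim e^u/\sqrt{2\pi u}$ for $u\to\infty$ and $I_1(u)\sim u/2$ for $u\to0$. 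No saddle-point computation is needed; the $I_1$ asymptotics do that work. This is why the hypothesis $xy\ge L/\eta$ appears in the lower bound: it puts $xz/L$ into the regime where $I_1(u)\asymp e^u/\sqrt u$.

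As for trade-offs: your route is elementary and self-contained for this proposition, but you correctly flag that the uniform Stirling/Laplace analysis over the full range $\sqrt2\le x,y\le\eta L$ (where $j^*/n$ and $j^*/m$ are bounded but \emph{not} small) is real work you have only outlined. The paper's route looks heavier at first because it imports the local-time/Bessel machinery of Section~3, but that machinery is needed anyway for the main barrier estimates (Theorem~\ref{thm: GW Barrier}), so for the paper it is essentially free and sidesteps all the asymptotic analysis you would have to carry out by hand.
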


The bounds (\ref{eq: Almost LCLT})-(\ref{eq: LCLT zero}) are the
same as the ones satisfied by $Y_{L}$ if the process
$Y_{t}$, under $P_{x}^{Y}$,
is a Bessel process of dimension zero\corDB{, since
\begin{equation}
P_{x}^{Y}\left(Y_{L}\in\cdot\right)=\delta_{0}e^{-\frac{x^{2}}{2L}}+1_{\left(0,\infty\right)}\frac{x}{L}e^{-\frac{x^{2}+y^{2}}{2L}}I_{1}\left(\frac{xy}{L}\right)dy,\label{eq: Bessel Proc Marginal}
\end{equation}
see above Lemma \ref{lem: Bessel Barrier}. Furthermore \corDBa{e.g.} Theorem \ref{thm: GW Barrier} a) can be written as
\begin{eqnarray}
&&P_{x^{2}/2}\left(f_{a, b}\left(l;L\right)-Cl_{L}^{\frac{1}{2}-\varepsilon}\le\sqrt{2T_{l}},l=1,\ldots,L-1 \big| \sqrt{2T_{L}}\in
\corON{H_{y,\delta}}\right) \nonumber\\
&&\le c\frac{\left(1+x-a\right)\left(1+y-b\right)}{L}.
\label{eq: GW bridge upper bound}
\end{eqnarray}
The probability that a Brownian bridge \corDBa{in the time interval $[0,L]$} stays above a linear barrier \corDBa{(or a small perturbation thereof) during $[1,L-1]$ as in the event in \eqref{eq: GW bridge upper bound}}, when it starts at distance $x-a$ from the line and ends at distance $y-b$ from it is of the order of the right-hand side of \eqref{eq: GW bridge upper bound} \corDBa{(see \eqref{eq: Brownian Bridge Barrier} and \eqref{eq: Brownian Barrier} below)}. Our Theorem \ref{thm: GW Barrier} can thus be thought of as a Galton--Watson process 
version of barrier results for the Brownian bridge. Heuristically, it arises from
\corOF{
  approximating $\sqrt{2 T_l}$ conditioned on its} end point by a Brownian bridge.
}

  \corON{
In the rest of the paper, we prove 
\corOF{Theorems \ref{thm: GW Barrier}  and}
\ref{thm: tree cover main}. We begin by proving,
in Section \ref{sec-2}, barrier estimates for 0-dimensional Bessel processes.
In Section \ref{sec-3}, we show that traversal counts and local times of
a random walk on $\mathbb{Z}^+$
give a Markovian structure closely related to the sampling of a 
Bessel-0 process. In Section \ref{sec-4}, we use the latter structure 
to transfer barrier estimates for Bessel-0  processes to the setting
of Theorem \ref{thm: GW Barrier}.
Finally, in
Section \ref{sec-5} we use a first/second moment method, together
with the barrier estimates of Theorem  \ref{thm: GW Barrier}, to 
obtain Theorem \ref{thm: tree cover main}.}

\section{Barrier estimates for the $0$-dimensional\\ Bessel process}
\label{sec-2}

We will derive the barrier estimate in 
\corOF{Theorem \ref{thm: GW Barrier}}
from similar results for a Brownian motion. Let $P_{x}^{W}$ be the
law of a Brownian motion $W_{t},t\ge0,$ starting at $x$, and let
$P_{x}^{W}\left(\cdot|W_{L}=y\right)$
be the law of a Brownian bridge starting at $x$ and ending at $y$
at time $L$. The probability that   a Brownian bridge stays above a linear
barrier is explicit: If   $a\le x$ and $b\le y$ then by the reflection principle, see e.g. \cite[Lemma 2.2]{Bramson1ConvergenceofSolutionsOfKolmogorovEqn}, 
\begin{equation}
P_{x}^{W}\left(W_{l}\ge f_{a, b}\left(l;L\right),l\in\left[0,L\right]|W_{L}=y\right)=1-\exp\left(-2\frac{\left(x-a\right)\left(y-b\right)}{L}\right).\label{eq: Brownian bridge barrier exact}
\end{equation}
If also $\left(x-a\right)\left(y-b\right)=O\left(L\right)$ this implies  
\[
P_{x}^{W}\left(W_{l}\ge f_{a, b}\left(l;L\right),l\in\left[0,L\right]|W_{L}=y\right)\asymp \frac{\left(x-a\right)\left(y-b\right)}{L}.
\]
and by conditioning on $W_{1}$ and $W_{L-1}$ one easily derives
that
\begin{equation}
P_{x}^{W}\left(W_{l}\ge f_{a, b}\left(l;L\right),l\in\left[1,L-1\right]|W_{L}=y\right)\asymp\frac{\left(1+x-a\right)\left(1+y-b\right)}{L}.\label{eq: Brownian Bridge Barrier}
\end{equation}
The next lemma shows that the probability has the same order of magnitude
if a ``bump'' is added or subtracted from the straight line.

\bl
\label{lem: Brownian Barrier-1}For all fixed $\varepsilon\in\left(0,\frac{1}{2}\right),\delta>0$, \corON{$\eta>1$}, $\tilde{C}\geq C+\de>0,$
one has uniformly in $a\le x,\,\,b\le y,$ $\left(x-a\right)\left(y-b\right)\le
\corON{\eta L}$,
$\left|x-y\right|\le
\corON{\eta L}$,
and $L$ large enough,
\begin{equation}
\begin{array}{l}
P_{x}^{W}\left(f_{a, b}\left(l;L\right)+Cl_{L}^{\frac{1}{2}-\varepsilon}\le W_{l}\le f_{x, y}\left(l;L\right)+\tilde{C}l_{L}^{\frac{1}{2}+\varepsilon},l\in\left[1,L-1\right],W_{L}\in
\corON{H_{y,\delta}}\right)\\
\asymp P_{x}^{W}\left(f_{a, b}\left(l;L\right)-Cl_{L}^{\frac{1}{2}-\varepsilon}\le W_{l},l\in\left[1,L-1\right],W_{L}\in
\corON{H_{y,\delta}}\right)\\
\asymp\frac{\left(1+x-a\right)\left(1+y-b\right)}{L}\frac{1}{\sqrt{L}}e^{-\frac{\left(x-y\right)^{2}}{2L}}.
\end{array}\label{eq: Brownian Barrier}
\end{equation}
\el
\begin{remark}\label{rem-ub}
The condition $\left(x-a\right)\left(y-b\right)\le \corON{\eta L}$
is not necessary for the upper bounds. One can simply drop the barrier. If we eliminate the condition $\left|x-y\right|\le
\corON{\eta L}$
then the last line in (\ref{eq: Brownian Barrier}) would become
\begin{equation}
\frac{\left(1+x-a\right)\left(1+y-b\right)}{L}\frac{1}{\sqrt{L}}\sup_{z\in
\corON{H_{y,\delta}}}e^{-\frac{\left(x-z\right)^{2}}{2L}}.\label{expcomp}
\end{equation}
for the upper bound, and   $\inf$ instead of $\sup$ for the lower bound. 
The condition $\left|x-y\right|\le
\corON{\eta L}$
is only used to guarantee that 
the \corON{supremum in \eqref{expcomp}} has the same order as $e^{-\corON{{\left(x-y\right)^{2}}/{2L}}}$.
\end{remark}
\begin{proof}[Proof of \corON{Lemma} \ref{lem: Brownian Barrier-1}]

It follows from Lemma 2.6, Lemma 2.7 and Proposition 6.1 of \cite{Bramson1ConvergenceofSolutionsOfKolmogorovEqn}  
that for any $C>0$ there is an $r$ large enough so that for  all $L>2r$
\[
\begin{array}{l}
P_{x}^{W}\left(f_{a, b}\left(l;L\right)+Cl_{L}^{\frac{1}{2}-\varepsilon}\le W_{l}\le f_{x, y'}\left(l;L\right)+Cl_{L}^{\frac{1}{2}+\varepsilon},l\in\left[r,L-r\right]|W_{L}=y'\right)\\
\ge cP_{x}^{W}\left(f_{a, b}\left(l;L\right)-Cl_{L}^{\frac{1}{2}-\varepsilon}\le W_{l},l\in\left[r,L-r\right]|W_{L}=y'\right).
\end{array}
\]

Let 
\[
A=\left\{ f_{a, b}\left(l;L\right)+Cl_{L}^{\frac{1}{2}-\varepsilon}\le W_{l}\le f_{x, y'}\left(l;L\right)+Cl_{L}^{\frac{1}{2}+\varepsilon},l\in\left[1,r\right]\cup\left[L-r,L-1\right]\right\} .
\]
For any fixed $r>0$ and and for all $l\ge1$ we have
\[
  \begin{array}{l}
    P_{x}^{W}\left(A|f_{a, b}\left(l;L\right)+Cl_{L}^{\frac{1}{2}-\varepsilon}\le W_{l}\le f_{x, y'}\left(l;L\right)+Cl_{L}^{\frac{1}{2}+\varepsilon},l\in\left[r,L-r\right],W_{L}=y'\right)\\
   \;\;\;\;\; 
   \ge c>0,
  \end{array}
\]
for a constant $c$ depending only on $r$ and $C$. Therefore
also
\[
\begin{array}{l}
P_{x}^{W}\left(f_{a, b}\left(l;L\right)+Cl_{L}^{\frac{1}{2}-\varepsilon}\le W_{l}\le f_{x, y'}\left(l;L\right)+Cl_{L}^{\frac{1}{2}+\varepsilon},l\in\left[1,L-1\right]|W_{L}=y'\right)\\
\ge cP_{x}^{W}\left(f_{a, b}\left(l;L\right)-Cl_{L}^{\frac{1}{2}-\varepsilon}\le W_{l},l\in\left[1,L-1\right]|W_{L}=y'\right).
\end{array}
\]

  Since the inequality in the opposite direction, with $c=1$, is trivial, we see by  sandwiching that we now  have shown
\begin{eqnarray}
&&P_{x}^{W}\left(f_{a, b}\left(l;L\right)+Cl_{L}^{\frac{1}{2}-\varepsilon}\le W_{l}\le f_{x, y'}\left(l;L\right)+Cl_{L}^{\frac{1}{2}+\varepsilon},l\in\left[1,L-1\right]|W_{L}=y'\right)
\nonumber\\
&&\asymp   P_{x}^{W}\left(f_{a, b}\left(l;L\right) \le W_{l},l\in\left[1,L-1\right]|W_{L}=y'\right)\nonumber\\
&&\asymp   P_{x}^{W}\left(f_{a, b}\left(l;L\right)-Cl_{L}^{\frac{1}{2}-\varepsilon}\le W_{l},l\in\left[1,L-1\right]|W_{L}=y'\right).\nonumber
\end{eqnarray}
The claim \eqref{eq: Brownian Barrier} then follows by first  using (\ref{eq: Brownian Bridge Barrier}), then multiplying by \corDB{the Gaussian density}, and then integrating over 
\corDB{$y' \in H_{y,\delta}$},
using the fact that \corDB{$f_{x, y'}\left(l;L\right) \le f_{x, y+\delta}\left(l;L\right) \leq f_{x, y}\left(l;L\right)+\de$}.
\end{proof}

The next lemma shows that we can replace the barrier $f_{a, b}\left(l;L\right)-Cl_{L}^{\frac{1}{2}-\varepsilon}$
``checked'' for all $l\in\left[1,L-1\right]$ with one   ``checked'' only
at integer times $l=1,\ldots,L-1$.
\bl
\label{lem: Brownian Barrier discrete}For all fixed $C>0,\varepsilon\in\left(0,\frac{1}{2}\right),\delta>0$ \corON{and $\eta>1$},
one has uniformly in $a\le x,b\le y,$  
$\left|x-y\right|\le 
\corON{\eta L}$ and $L$ large enough,
\begin{equation}
\begin{array}{l}
  P_{x}^{W}\left(f_{a, b}\left(l;L\right)-Cl_{L}^{\frac{1}{2}-\varepsilon}\le W_{l},l=1,\ldots,L-1,W_{L}\in\corON{H_{y,\delta}}\right)\\
\;\;\;\;
\le c\frac{\left(1+x-a\right)\left(1+y-b\right)}{L}\frac{1}{\sqrt{L}}e^{-\frac{\left(x-y\right)^{2}}{2L}}.
\end{array}\label{eq: Brownian Barrier discrete}
\end{equation}
\el
\begin{proof}
Let  
\[
A_{L}=\left\{ f_{a, b}\left(l;L\right)-Cl_{L}^{\frac{1}{2}-\varepsilon}\le W_{l},l=1,\ldots,L-1\right\} ,
\]
and for a $\theta>1$ to be fixed later
\bea
&&
h_{l}\left(u,v\right)=P_{u}^{W}\left(f_{a, b}\left(l+t;L\right)-\theta C\left(l+t\right)_{L}^{\frac{1}{2}-\varepsilon}\le W_{t},t\in\left[0,1\right]|W_{1}=v\right).\nn
\eea
Note that  $h_{l}\left(u,v\right)$ is monotone increasing in $u,v$.
Hence using the Markov property we  have 
\begin{equation}
\begin{array}{l}
P_{x}^{W}\left(f_{a, b}\left(l\right)-\theta Cl_{L}^{\frac{1}{2}-\varepsilon}\le W_{l},l\in\left[1,L-1\right],W_{L}\in
\corON{H_{y,\delta}}\right)\\
=\corON{E}_{x}^{W}\left(\prod_{l=1}^{L-2}h_{l}\left(W_{l},W_{l+1}\right);W_{L}\in
\corON{H_{y,\delta}}\right)\\
\ge \corON{E}_{x}^{W}\left(\prod_{l=1}^{L-2}h_{l}\left(W_{l},W_{l+1}\right);A_{L};W_{L}\in
\corON{H_{y,\delta}}\right)\\
\ge\prod_{l=1}^{L-2}h_{l}\left(f_{a, b}\left(l\right)-Cl_{L}^{\frac{1}{2}-\varepsilon},f_{a, b}\left(l+1\right)-C\left(l+1\right)_{L}^{\frac{1}{2}-\varepsilon}\right)P_{x}^{W}\left(A_{L};W_{L}\in
\corON{H_{y,\delta}}\right).
\end{array}\label{eq: cont to disc barrier}
\end{equation}
But using (\ref{eq: Brownian bridge barrier exact})  with the $L$ there equal to $1$, we have   
\[
\begin{array}{l}
\sum_{l=1}^{L-2}\left(1-h_{l}\left(f_{a, b}\left(l\right)-Cl_{L}^{\frac{1}{2}-\varepsilon},f_{a, b}\left(l+1\right)-C\left(l+1\right)_{L}^{\frac{1}{2}-\varepsilon}\right)\right)\\
\le\sum_{l=1}^{L-2}\exp\left(-c(\theta-1)^{2}C^{2}l_{L}^{\frac{1}{2}-\varepsilon}\right)\le\frac{1}{2},
\end{array}
\]
if we let $\theta$ be large enough depending on $C$, but independently
of $L$. Thus (\ref{eq: Brownian Barrier discrete}) follows from (\ref{eq: Brownian Barrier}), Remark \ref{rem-ub}
and (\ref{eq: cont to disc barrier}).
\end{proof}
\corOF{ \begin{remark}
\label{rem-ub-2.3}
As in Remark 
\ref{rem-ub},
the condition $\left|x-y\right|\le
\corON{\eta L}$
can be dropped, at the cost of replacing the right hand side of 
(\ref{eq: Brownian Barrier discrete}) by
\eqref{expcomp}.
\end{remark}}
We now derive the corresponding estimates for   the 0-dimensional Bessel process by
applying a change of measure. \corDB{To formally introduce the Bessel process recall that the squared Bessel process $Y^{2}_{t}$ is a Markov process, $\mbox{BESQ}^{0}$, with semigroup, for $x>0$,
\begin{equation}
V_{t}(x,y)=e^{-x/2}\de_{0}(dy)+ 1_{\left(0,\infty\right)}(y){1 \over 2t}\(\frac{x}{y}\)^{1/2}\,\,   I_{1}(\sqrt{xy}/t)e^{-(x+y)/2t}\,dy,\label{besqref}
\end{equation}
see \cite[Chapter IX, Corollary (1.4)]{RY}, 
where 
\begin{equation}
I_{1}(z)=\sum_{k=0}^{\ff} {(z/2)^{2k+1} \over k!(k+1)!},\label{besseries}
\end{equation}
see \cite[8.447.2]{GR}, is the first Bessel function. Let $P^Y_y$  denote the law of the $Y_t$ starting at $Y_0 = z$. From \eqref{besqref} the equality \eqref{eq: Bessel Proc Marginal} easily follows.
}

\bl
\label{lem: Bessel Barrier}
For all fixed $\delta>0,\varepsilon\in\left(0,\frac{1}{2}\right)$ 
\corON{and $\eta>1$}, there exists 
\corOF{$c=c(\delta,\eta,\varepsilon,C )$ so that} for
$1\le x,y$, $0\le a\le x$, $0\le b\le y$,
and $\left|x-y\right|\le
\corON{\eta L}$, 
\begin{eqnarray}
\label{eq: Bessel Proc Asymp UB}
&&P_{x}^{Y}\left(f_{a, b}\left(l;L\right)-Cl_{L}^{\frac{1}{2}-\varepsilon}\le Y_{l},l=1,\ldots,L-1,Y_{L}\in
\corON{H_{y,\delta}}\right)\nonumber \\
&&\leq c 
\frac{\left(1+x-a\right)\left(1+y-b\right)}{L}\sqrt{\corON{
  \frac{x}{yL}}}e^{-\frac{\left(x-y\right)^{2}}{2L}}.
\end{eqnarray}
If in addition  we assume that
$\left(x-a\right)\left(y-b\right)\le
\corON{\eta L}$, 
and $\max (ab, |a-b|)\geq \corON{L/\eta}$,
then for any
$\tilde{C}\geq C+\de>0$ 
there exists 
\corOF{$c'=c'(\delta,\eta,\varepsilon,C,\tilde C)$ so that} for
\begin{equation}
\label{eq: Bessel Proc Asymp LB}
\begin{array}{l}
P_{x}^{Y}\left(f_{a, b}\left(l;L\right)+Cl_{L}^{\frac{1}{2}-\varepsilon}\le Y_{l}\le f_{x, y}\left(l;L\right)+\tilde{C}l_{L}^{\frac{1}{2}+\varepsilon},\right.\\
\hspace{2.1in}l=1,\ldots,L-1,Y_{L}\in
\corON{H_{y,\delta}}\Big)\\
\geq c' \frac{\left(1+x-a\right)\left(1+y-b\right)}{L}\sqrt{\corON{
  \frac{x}{yL}}}e^{-\frac{\left(x-y\right)^{2}}{2L}}.
\end{array}
\end{equation}
%
%
\el
\corOF{ \begin{remark}
\label{rem-ub-2.4}
\corDBb{a)}
As in Remark 
\ref{rem-ub},
the condition $\left|x-y\right|\le
\corON{\eta L}$
can be dropped in
\eqref{eq: Bessel Proc Asymp UB},
at the cost of replacing the right hand side 
by
\[\frac{\left(1+x-a\right)\left(1+y-b\right)}{L}\sqrt{\corON{
  \frac{x}{yL}}}\sup_{z\in H_{y,\delta}}
e^{-\frac{\left(x-z\right)^{2}}{2L}}.\]
\corDBb{b) This is where we need the condition $\max (ab, |a-b|)\geq\corON{ L/\eta}$ from Theorem \ref{thm: GW Barrier} b).
It guarantees} that 
\begin{equation}
\int_{1}^{ L-1}\frac{1}{f^{ 2}_{a, b}\left(s;L\right)}\,ds=\frac{L-2}{ab+\frac{(   a-b)^{ 2}}{L}\left(1-\frac{1}{L}\right)}\leq c( \corON{  \eta})\label{integralUB}
\end{equation}
independent of $L\geq 3$. 
\corOF{The bound \eqref{integralUB}
	is used \corDBb{below} to bound the Radon--Nikodym derivative
	of a 0-Bessel process with respect to Brownian motion.}
\end{remark}}

\begin{proof}[Proof of Lemma \ref{lem: Bessel Barrier}]
Recall that $Y_{t}$ solves the SDE $dY_{t}=dW_{t}-\frac{1}{2Y_{t}}dt$
for a Brownian motion $W_{t}$, 
\corON{until $\tau_0$, the time it hits $0$} \corDB{(not to be confused with the hitting time
	$\tau_x$ of a vertex $x$ of the binary tree, a notation used elsewhere in this paper)}. By  Girsanov's theorem  
\corON{(applied until $Y_t$ hits $\epsilon$) and monotone convergence}
it follows that for any $F\in \FF_{t}$,
\be
\corON{E}_{x}^{Y} \left(1_{\{   \tau_{0}>t\}}\,F\right)=\corON{E}_{x}^{W}
\left(1_{\{   \tau_{0}>t\}}\,F\,\,\sqrt{\frac{x}{W_{t}}}\exp\left(-\frac{3}{8}\int_{0}^{t}\frac{1}{W_{s}^{2}}ds\right)\right).\label{Girs.1}
\ee
We also note that $0$ is an absorbing boundary for the $0$-dimensional Bessel process, so that 
$Y_{L}>0$ implies that $\tau_{0}>L$. Therefore, since $y\geq 1$,
\begin{eqnarray*}
  \corON{\cal I}_{1}&:=&P_{x}^{Y}\left(f_{a, b}\left(l;L\right)-Cl_{L}^{\frac{1}{2}-\varepsilon}\le Y_{l},l=1,\ldots,L-1,Y_{L}\in
\corON{H_{y,\delta}}\right)\\
&=&P_{x}^{Y}\left(f_{a, b}\left(l;L\right)-Cl_{L}^{\frac{1}{2}-\varepsilon}\le Y_{l},l=1,\ldots,L-1,Y_{L}\in
\corON{H_{y,\delta}}, \corON{
  {   \tau_{0}>L}}\right)\nn\\
&\le&\sqrt{\frac{x}{y}}P_{x}^{W}\left(f_{a, b}\left(l;L\right)-Cl_{L}^{\frac{1}{2}-\varepsilon}\le W_{l},l=1,\ldots,L-1,Y_{L}\in
\corON{H_{y,\delta}}\right),\nn
\end{eqnarray*}
by  (\ref{Girs.1}). Then using (\ref{eq: Brownian Barrier discrete}),
\begin{equation}
  \corON{\cal I}_{1}\leq c\sqrt{\frac{x}{y}}\frac{\left(1+x-a\right)\left(1+y-b\right)}{L}\frac{1}{\sqrt{L}}e^{-\frac{\left(x-y\right)^{2}}{2L}}.\label{eq: Bessel Proc UB}
\end{equation}
\corOF{This completes the proof of \eqref{eq: Bessel Proc Asymp UB}.}

\corOF{We turn to the proof of the lower bound 
\eqref{eq: Bessel Proc Asymp LB}.
Letting}  
\[
\wt A=\left\{ f_{a, b}\left(l;L\right)+Cl_{L}^{\frac{1}{2}-\varepsilon}\le W_{l}\le f_{x, y}\left(l;L\right)+\tilde{C}l_{L}^{\frac{1}{2}+\varepsilon},l\in\left[1,L-1\right]\right\} ,
\]
we have, by (\ref{Girs.1}),
\begin{equation}
\begin{array}{l}
P_{x}^{Y}\left(\wt A,Y_{L}\in
\corON{H_{y,\delta}}\right)
\ge\sqrt{\frac{x}{y+\delta}}\corON{E}_{x}^{W}\left(
\exp\left(-\frac{3}{8}\int_{0}^{L}\frac{1}{W_{t}^{2}}dt\right);
\wt A;W_{L}\in
\corON{H_{y,\delta}}\right)\\
\ge c\sqrt{\frac{x}{y}}
\corON{E}_{x}^{W}\left(\exp\left(-\frac{3}{8}\int_{0}^{1}\frac{1}{W_{t}^{2}}dt
\right)\cdot \exp\left(-\frac{3}{8}\int_{L-1}^{L}\frac{1}{W_{t}^{2}}dt\right);
\wt A; W_{L}\in
\corON{H_{y,\delta}}\right),
\end{array}\label{eq: Bessel Prob LB}
\end{equation}
where the last inequality follows because on the event $W_{l}\ge f_{a, b}\left(l;L\right)+Cl_{L}^{\frac{1}{2}-\varepsilon},l\in\left[1,L-1\right]$,
we have $\int_{1}^{L-1}\frac{1}{W_{t}^{2}}dt\le \corON{c(   \eta)} $ 
by (\ref{integralUB}).

Furthermore for any $u,v\ge \corON{1/\eta}$ we have
\bea
&&
J_{u,v}:=\corON{E}_{u}^{W}\left(\exp\left(-\frac{1}{2}\int_{0}^{1}
\frac{1}{W_{t}^{2}}dt\right)\Big |W_{1}=v\right)\nn\\
&&
=\corON{E}_{0}^{W}\left(\exp\left(-\frac{1}{2}\int_{0}^{1}\frac{1}{(   W_{t}+u+t( v  -u))^{2}}dt\right)\Big |W_{1}=0\right)\geq c>0\nn,
\eea
where the constant depends only on \corON{$\eta$}, 
since the standard Brownian bridge  has positive probability 
of staying between 
$\corON{1/2\eta}$
and 
$-\corON{1/2\eta}$.
We apply this to (\ref{eq: Bessel Prob LB}), where, after the 
Markov property we need to lower bound $J_{x,W_{1}}$ and $J_{W_{L-1},v}$ for 
$v\in\left[y,y+\delta\right]$.  Since $x,y\geq 1$ and
\corDB{$\max (ab, |a-b|)\geq L/\eta$ implies that}
\corON{$f_{a, b}\left(1;L\right),f_{a, b}\left(L-1;L\right)\ge
1/\eta$},
we therefore have in fact 
\begin{equation}
\begin{array}{l}
P_{x}^{Y}\left(f_{a, b}\left(l;L\right)+Cl_{L}^{\frac{1}{2}-\varepsilon}\le Y_{l}\le f_{x, y}\left(l;L\right)+\tilde{C}l_{L}^{\frac{1}{2}+\varepsilon},l\in\left[1,L-1\right],Y_{L}\in
\corON{H_{y,\delta}}\right)\\
\;\;\;\;
\ge c\sqrt{\frac{x}{y}}P_{x}^{W}\left(\wt A,;W_{L}\in
\corON{H_{y,\delta}}\right)
\ge c\sqrt{\frac{x}{y}}\frac{\left(1+x-a\right)\left(1+y-b\right)}{L}\frac{1}{\sqrt{L}}e^{-\frac{\left(x-y\right)^{2}}{2L}},
\end{array}\label{eq: Bessel Proc LB}
\end{equation}
using (\ref{eq: Brownian Barrier}). 
\corOF{This yields \eqref{eq: Bessel Proc Asymp UB}.}
\end{proof}

\section{Local times and traversal counts}
\label{sec-3}

Let $X_{t},t\ge0,$ be the continuous time random
walk on the weighted graph $\left\{ 0,1,2,\ldots\right\} $ with unit
weights on each edge.  That is, the continuous time random
walk on $\left\{ 0,1,2,\ldots\right\} $ with exponential holding times of mean $1$ at $0$ and mean $1/2$ at all other points, and whose  jump chain  is simple random walk reflected at the origin. Let $L^{x}_{t}$ denote the local time at $x$ and $\tau (s)=\inf \{t>0\,|\,L^{0}_{t}=s\}$, the inverse local time at $0$. Fix $u>0$ once and for all. We use $\mathbb{P}_{u}$ to denote the probability for the process $\bar X_{t}=X_{t\wedge \tau (u)},t\ge 0$.  Let $\mathcal{L}_{l}=L^{l}_{\tau (u)}$, the total local time of $\bar X$ at $l$, and let $T_{l},l\ge0,$ be the
discrete traversal count $l\to l+1$. Under $\mathbb{P}_{u}$ the $T_{l},l\ge0,$
and $\mathcal{L}_{l},l\ge0,$ together have a Markovian structure,
as   stated by the following lemma.  

\bl[\corON{Markovian structure}]$\mbox{ }$
\label{lem: coupling}
\begin{itemize} 
\item[a)] The sequence $\left(Z_{n}\right)_{n\ge0}$ defined by 
\[
\left(Z_{0},Z_{1},Z_{2},\ldots\right)=\left(\mathcal{L}_{0},T_{0},\mathcal{L}_{1},T_{1},\mathcal{L}_{2},T_{2},\ldots\right)
\]
is a  time inhomogeneous Markov chain under $\mathbb{P}_{u}$. When $n=2k$ is even the law of $Z_{n+1}=T_{k}$
conditioned on $Z_{n}=\mathcal{L}_{k}=v$ is Poisson with parameter
$v$, and when $n=2k+1$ is odd the law of   $Z_{n+1}=\mathcal{L}_{k+1}$
conditioned on $Z_{n}=T_{k}=m$ is $0$ if $m=0$ and is the gamma distribution with
shape $m$ and scale parameter $1$ if $m\geq 1$.

\item[b)] Under $\mathbb{P}_{u}\left(\cdot|T_{1}=m_{1},\ldots,T_{L}=m_{L}\right)$
the $\mathcal{L}_{1},\ldots,\mathcal{L}_{L+1}$ are independent, $\mathcal{L}_{l}$
depends only on $T_{l-1}$ and $T_{l}$, and the $\mathbb{P}_{u}\left(\cdot|T_{l-1}=m_{l-1},T_{l}=m_{l}\right)$-law
of $\mathcal{L}_{l}$ is the gamma distribution with shape parameter
$m_{l-1}+m_{l}$ and scale parameter $\frac{1}{2}$.

\item[c)] Under $\mathbb{P}_{u}\left(\cdot|\mathcal{L}_{1}=y_{1},\ldots,\mathcal{L}_{L+1}=y_{L+1}\right)$
the $T_{1},\ldots,T_{L}$ are independent, $T_{l}$ depends only on
$\mathcal{L}_{l}$ and $\mathcal{L}_{l+1}$, and  with $u_{l}, u_{l+1}>0$,
\begin{equation}
\mathbb{P}_{u}\left(T_{l}=m|\mathcal{L}_{l}=u_{l},\mathcal{L}_{l+1}=u_{l+1}\right)=\frac{\left(u_{l}u_{l+1}\right)^{m}/\left(\left(m-1\right)!m!\right)}{\sqrt{u_{l}u_{l+1}}I_{1}\left(2\sqrt{u_{l}u_{l+1}}\right)}.\label{eq: cond traversal bessel dist}
\end{equation}

\item[d)] $\left(T_{l}\right)_{l\ge0}$ under $\mathbb{P}_{u}$ is a Markov
chain with the same transition kernel as that of $\left(T_{l}\right)_{l\ge0}$
under $P_{\cdot}$.

\item[e)] $\left(\mathcal{L}_{l}\right)_{l\ge0}$ under $\mathbb{P}_{u}$ is ${1 \over 2}\mbox{ BESQ}^{0}(2u)$ at integer times.   That is, $\left(\mathcal{L}_{l}\right)_{l\ge0}$ under $\mathbb{P}_{u}$ is
a Markov chain with the same transition kernel as $\left(\frac{1}{2}Y_{l}^{2}\right)_{l\ge0}$
under  $P_{\cdot}^{Y}$. 
\end{itemize}
\el 

\begin{remark} 
The distribution in (\ref{eq: cond traversal bessel dist}) is a generalization to $\nu=-1$ of the Bessel distribution $\mbox{{\it Bessel}}(\nu, z)$ defined in  \cite{PY2} for $\nu>-1$. This distribution appears in \cite{Feller}. See also  \cite{YK}.
\end{remark}
\begin{proof}[Proof of Lemma \ref{lem: coupling}]
Construct a collection of independent standard Poisson processes,
one for each directed edge. The random walk is constructed by placing
a local time clock at each vertex which only advances when the walker
is at that vertex. When \corDB{the walker is at a given vertex}
its local time clock advances until one of the two independent Poisson processes
associated with edges originating at the vertex register an
arrival, and then \corOF{the walker} 
\corDB{traverses} the corresponding edge. Note that the minimum of two independent exponentials of mean 1 is an exponential of mean $1/2$. From this construction
it is clear that given $u$ the count $T_{0}$ is the number of arrivals
up to time $u$ of the point process associated to  the edge  $0\to1,$ and therefore is
Poisson with parameter
$u$. Given $T_{0}=m$, then $\mathcal{L}_{1}$, the amount of local time spent
at vertex $1$ until the walker has returned to $0$ a total of $m$
times,  is the time until the $m$-th
arrival of the Poisson process associated to the edge $1\to0,$ and therefore
gamma distributed with
shape $m$ and scale parameter $1$. Iterating this a) follows and  d) is immediate. 

b) \corDB{It follows from the fact that $Z_{n}$ is a Markov chain that under\\ $\mathbb{P}_{u}\left(\cdot|T_{1}=m_{1},\ldots,T_{L}=m_{L}\right)$
the $\mathcal{L}_{1},\ldots, \mathcal{L}_{L+1}$ are independent and that the conditional law of
$\mathcal{L}_l$ depends only on $T_{l-1}$ and $T_l$. The characterization of the conditional law}
follows immediately since each local time $\mathcal{L}_{l}$ is the sum of independent mean $1/2$ exponentials, one for every visit to $l$, and $m_{l-1}+m_{l}$ is the number of such visits. 
 
c) \corDB{Similarly to above the conditional independence and dependence of the conditional law of $T_{l}$
only on $\mathcal{L}_l$ and $\mathcal{L}_{l+1}$ follows from the Markovian structure.}
From
the joint law, with $u_{l}>0$ and $m_{l}\geq 1$,
\bea
\mathbb{P}_{u}\left(T_{l}=m_{l},\mathcal{L}_{l+1}=du_{l+1}|\mathcal{L}_{l}=u_{l}\right)&=& e^{-u_{l}}\frac{u_{l}^{m_{l}}}{m_{l}!}\times\frac{u_{l+1}^{m_{l}-1}}{\left(m_{l}-1\right)!}e^{-u_{l+1}},\nn\\
&=& \frac{ 1}{u_{l+1}}\times \frac{(u_{l}u_{l+1})^{m_{l}}}{m_{l}!\left(m_{l}-1\right)!} e^{-(u_{l}+u_{l+1})},\nn
\eea
while
\begin{equation}
\mathbb{P}_{u}\left(T_{l}=0,\mathcal{L}_{l+1}=du_{l+1}|\mathcal{L}_{l}=u_{l}\right)=e^{-u_{l}}\de_{0}(du_{l+1}).\label{}
\end{equation}
Hence, using (\ref{besseries}),  we obtain
\bea
&&
\mathbb{P}_{u}\left(\mathcal{L}_{l+1}=du_{l+1}|\mathcal{L}_{l}=u_{l}\right)\label{besstrans}\\
&&= e^{-u_{l}}\de_{0}(du_{l+1})+\frac{ 1}{u_{l+1}} \sum_{m\ge 1}{ \left(u_{l}u_{l+1}\right)^{m}\over \left(m-1\right)!m!}\,\, e^{-(u_{l}+u_{l+1})}\nn\\
&&=e^{-u_{l}}\de_{0}(du_{l+1})+ \(\frac{u_{l}}{u_{l+1}}\)^{1/2}\,\,   I_{1}(2\sqrt{ u_{l}u_{l+1}})e^{-(u_{l}+u_{l+1})},\nn
\eea
and
one concludes that for $m\geq 1$, which is equivalent to $u_{l+1}>0$,
\[
\mathbb{P}_{u}\left(T_{l}=m|\mathcal{L}_{l}=u_{l},\mathcal{L}_{l+1}=u_{l+1}\right)=\frac{\left(u_{l}u_{l+1}\right)^{m}/\left(\left(m-1\right)!m!\right)}{\sqrt{u_{l}u_{l+1}}I_{1}\left(2\sqrt{u_{l}u_{l+1}}\right)}.
\]

e) This  follows by comparing (\ref{besstrans}) with (\ref{besqref}). 
\end{proof}

We will use  $\mathbb{Q}$ to denote the law of the time inhomogeneous Markov chain
$\left(Z_{n}\right)_{n\ge 0}$ under  $\mathbb{P}_{u} $. For $F\in \si(Z_{n}, Z_{n+1},\ldots)$     let
\begin{equation}
\mathbb{Q}_{n}^{x}=\mathbb{Q}\(F\,|\, Z_{n}=x\).\label{defq.1}
\end{equation}
Thus $\mathbb{Q}=\mathbb{Q}_{0}^{u}$. 

\corOF{For future reference we restate part e) of Lemma
\ref{lem: coupling} as}
\begin{equation}
\sqrt{2Z_{2l}},\hspace{.1 in} l=0,1, \ldots  \mbox{ under } \mathbb{Q} \mbox{ is a Bes}^{0}(\sqrt{2u})\Big|_{\mathbb{N}}.\label{defq.2}
\end{equation}
In the following we often replace the $Z$'s by the equivalent $\mathcal{L}$'s  and $T $'s.

 The following lemma gives some estimates
for one step transitions of the Markov chain.

\bl[\corON{One step estimates}]\mbox{}
\label{lem: One step estimates}
\begin{itemize}
\item[a)] For all $x>0$ such that $x^{2}/2$ is an integer and all $l\geq 1$,
\bea
&&
\mathbb{Q}_{2l-1}^{x^{2}/2}\(\left|\sqrt{2Z_{2l}}-x\right|\ge z\)\nn\\
&&=\mathbb{Q}\left(\left|\sqrt{2\mathcal{L}_{l}}-x\right|\ge z\,\Big|\,\sqrt{2T_{l-1}}=x \right)\le e^{-cz^{2}} \label{eq: count  to loc time transition bound}.
\eea

\item[b)] For all $-\sqrt{2}\le a<b$,
\begin{equation}
\inf_{\frac{x^{2}}{2}\in\left\{ 1,2,\ldots\right\} }\mathbb{Q} \left(\sqrt{2\mathcal{L}_{l}}-x\in\left[a,b\right]\,\Big|\,\sqrt{2T_{l-1}}=x \right)\ge c>0,\label{eq: one step loc time CLT}
\end{equation}
with a constant $c$ depending only on $a,b$.
\item[c)] For all $x>0$, and all $l\geq 1$,
\bea
&&
\mathbb{Q}_{2l}^{x^{2}/2}\(\left|\sqrt{2Z_{2l+1}}-x\right|\ge z\)\label{eq: loc time to  count transition bound}\\
&&
=\mathbb{Q} \left(\left|\sqrt{2T_{l}}-x\right|\ge z\,\Big|\,\sqrt{2\mathcal{L}_{l}}=x \right)\le e^{-cz^{2}}.\nn
\eea
\item[d)] For all $-1\le a<b$,
\begin{equation}
\inf_{x\ge1:x+\left[a,b\right]\cap\sqrt{2\mathbb{N}}\ne\emptyset}\mathbb{Q} \left(\sqrt{2T_{l}}-x\in\left[a,b\right]\,\Big|\,\sqrt{2\mathcal{L}_{l}}=x\right)\ge c>0,\label{eq: one set count clt}
\end{equation}
with a constant $c$ depending only on $a,b$.
\end{itemize}
\el
\begin{proof}[Proof of Lemma \ref{lem: One step estimates}]
Standard large deviation bounds for the gamma and Poisson distributions
give (\ref{eq: count  to loc time transition bound}) and   (\ref{eq: loc time to  count transition bound}). \corON{Indeed, to see 
\eqref{eq: loc time to  count transition bound}, 
recall that by Lemma \ref{lem: coupling}~a), the conditional
law of $T_l$, when conditioned on $\mathcal{L}_{l-1}=x^2/2$, is 
Poisson of parameter $x^2/2$. Since the Legendre transform of the
logarithmic moment generating function of the
Poisson distribution of parameter $\lambda$ is
\begin{equation}
  \label{eq-IP}
  I_{P,\lambda}(x)=\lambda-x+x\log(x/\lambda)\,,
\end{equation}
we obtain from Chebyshev's inequality that, for $z\geq 0$,
\begin{eqnarray}
  \label{eq-IP1}
&&  \log \mathbb{Q} \left(\sqrt{2T_{l}}-x\ge z\,\Big|\,
\sqrt{2\mathcal{L}_{l}}=x \right)\le
-I_{P,x^2/2}\left(\frac{(z+x)^2}{2}
\right) \nn \\
&&\;\;\;=-
\left[\frac{x^2}{2}+\frac{(x+z)^2}{2}\left(2\log\left(1+\frac{z}{x}
\right)-1\right)\right]\,,\end{eqnarray}
and 
\begin{eqnarray}
  \label{eq-IP2}
  &&\log \mathbb{Q} \left(\sqrt{2T_{l}}-x\le -z\,\Big|\,
\sqrt{2\mathcal{L}_{l}} =x \right)\le -I_{P,x^2/2}
\left(\frac{(x-z)^2}{2}
\right)\nn\\
&&\;\;\;=-
\left[\frac{x^2}{2}+\frac{(x-z)^2}{2}\left(2\log\left(1-\frac{z}{x}
\right)-1\right)\right]\,.
\end{eqnarray}
Separating according to whether $z/x>1$ (only relevant for \eqref{eq-IP1})
or $z/x\leq 1$, and performing some algebra yields 
\eqref{eq: loc time to  count transition bound}. The argument for 
(\ref{eq: count  to loc time transition bound})  is similar: 
recall, again from  Lemma~\ref{lem: coupling}~a), that the conditional
law of $\mathcal{L}_l$, when conditioned on $T_{l-1}=x^2/2$, is 
gamma of shape parameter $x^2/2$ and scale parameter  $1$. 
Recall that the Legendre transform of 
the logarithmic moment generating function 
of a Gamma variable of shape parameter $m$ and scale parameter $1$
equals 
\begin{equation}
  \label{eq-IG}
  I_{\Gamma,\lambda}(x)=x-m+m\log(m/x)\,.
\end{equation}
Replacing the expression in  \eqref{eq-IP} with \eqref{eq-IG} and
then using Chebyshev's inequality to obtain the analogue of 
\eqref{eq-IP1} and \eqref{eq-IP2} (with $m=x^2/2$),  the
proof then proceeds 
similarly to the Poisson case.}

To see (\ref{eq: one step loc time CLT}), note that by the central
limit theorem for a sum of independent exponentials,
\[
\lim_{x\to\infty,\frac{x^{2}}{2}\in\left\{ 1,2,\ldots\right\} }\mathbb{Q}_{ 1}^{x^{2}/2}\left(\mathcal{L}_{1}\in\frac{1}{2}\left[\left(x+a\right)^{2},\left(x+b\right)^{2}\right]\right)=\int_{a}^{b}\frac{1}{\sqrt{4\pi}}e^{-\frac{u^{2}}{4}}du>0.
\]
Thus for a large enough $\tilde{c}>0$ we have for $x^{2}/2\ge\tilde{c}$
that
\[
\mathbb{Q}_{ 1}^{x^{2}/2}\left(\sqrt{2\mathcal{L}_{1}}-x\in\left[a,b\right]\right)\ge c>0.
\]
Since the density $f\left(z\right)$ of a gamma random variable with
shape parameter $x^{2}/2$ and scale parameter $1$ satisfies $f\left(z\right)\ge c>0$
for all $0\le z\le\frac{1}{2}\left(\tilde{c}+b\right)^{2}$ if $1\le x^{2}/2\le\tilde{c}$,
it follows that the same holds for $1\le x^{2}/2\le\tilde{c}$. This
proves (\ref{eq: one step loc time CLT}). Similarily (\ref{eq: one set count clt})
follows from the central limit theorem for the Poisson distribution,
and since by our assumptions on $\left[a,b\right]$ imply that the
interval always contains an element from $\sqrt{2\mathbb{N}}$.
\end{proof}

 The following lemma gives some estimates
for conditioning the Markov chain at two times, before and after. 

\bl[\corON{Local time and transversal bounds}]\mbox{}
\label{lem: Loc Time and Traversals bounds}
\begin{itemize}
\item[a)] For all $m_{l-1},m_{l}$ and   $0\le z\le\sqrt{ m_{l-1}+m_{l} }$,
\begin{equation}
\mathbb{Q}_{ 1}^{m_{0}}\left(\sqrt{2\mathcal{L}_{l}}\ge\sqrt{ m_{l-1}+m_{l} }-z\,\Big|\,T_{l-1}=m_{l-1},T_{l}=m_{l}\right)\ge1-\exp\left(-z^{2}\right).\label{eq: cond loc time LD bound}
\end{equation}
\item[b)] For all $-\sqrt{2}\le a<b$,
\begin{equation}
  \inf_{\stackrel{t_{l-1}>0}{t_{l}\ge0}}\mathbb{Q}_{ 1}^{m_{0}}\left(\sqrt{2\mathcal{L}_{l}}-\sqrt{ m_{l-1}+m_{l} }\in\left[a,b\right]\,\Big|\,T_{l-1}=m_{l-1},T_{l}=m_{l}\right)\ge c,\label{eq: cond loc time clt bound}
\end{equation}
for all $l$, and a constant $c>0$ depending only on $a,b$.
\item[c)] For all $u_{l},u_{l+1}$ and $z\ge0$
\begin{equation}
\mathbb{Q}_{ 1}^{m_{0}}\left(|\sqrt{2T_{l}}-\left(4u_{l}u_{l+1}\right)^{1/4}|\ge z\,\Big|\,\mathcal{L}_{l}=u_{l},\mathcal{L}_{l+1}=u_{l+1}\right)\le
c\exp\left(-cz^{2}\right).\label{eq: excursion time upper bound from local time}
\end{equation}
\item[d)] \nc For all $  a<b$, with \[D_{a,b}=\{u_{l}>0,u_{l+1}\ge0\,|\,\left(u_{l}u_{l+1}\right)^{1/4}+\left[a,b\right]\cap \sqrt{2\mathbb{N} } \ne\emptyset\},\]
we have
\be 
\hspace{-.3 in}\inf_{u_{l},u_{l+1}\in D_{a,b}}\mathbb{Q}_{ 1}^{m_{0}}\left(\sqrt{2T_{l}}-\left(4u_{l}u_{l+1}\right)^{1/4}\in\left[a,b\right]\,\Big|\,\mathcal{L}_{l}=u_{l},\mathcal{L}_{l+1}=u_{l+1}\right)\ge c,\label{eq: cond exc clt bound}
\ee 
for some $c>0$ depending only on $a,b$.
\end{itemize}
\el
\begin{proof}
Similarly to (\ref{eq: one step loc time CLT}), the claim (\ref{eq: cond loc time clt bound})
follow from the central limit theorem for a sum of exponential random
variables, since under the conditioning $\mathcal{L}_{l}$  
have the gamma distribution with an integer shape parameter (see Lemma
\ref{lem: coupling} b). A large
deviation bound for the gamma distribution that can be proved using
the exponential Chebyshev inequality gives
\[
\mathbb{Q}_{ 1}^{m_{0}}\left(\mathcal{L}_{l}\le\frac{m_{l-1}+m_{l}}{2}-a\,\Big|\,T_{1}=m_{1},\ldots,T_{L}=m_{L}\right)\le\exp\left(-2\frac{a^{2}}{m_{l-1}+m_{l}}\right)
\]
for $0\le a\le\frac{m_{l-1}+m_{_{l}}}{2}. $
By taking $a=z\sqrt{\frac{m_{l-1}+m_{l}}{2}}$ this implies (\ref{eq: cond loc time LD bound}).

A basic large deviation estimate for the Bessel distribution (see
 \cite[Lemma 7.12]{BK} and its proof)  
\bea
&&
\mathbb{Q}_{ 1}^{m_{0}}\left(\left|T_{l}-\sqrt{u_{l}u_{l+1}}\right|\ge a\,\Big|\,\mathcal{L}_{l}=u_{l},\mathcal{L}_{l+1}=u_{l+1}\right)\nn\\
&&\hspace{.5 in}\le c\exp\left(-c\frac{a^{2}}{\sqrt{u_{l}u_{l+1}}}+c\frac{a}{\sqrt{u_{l}u_{l+1}}}\right)\text{ for all }a\ge0.
\eea
By taking $a=z\sqrt{u_{l}u_{l+1}}$ this implies (\ref{eq: excursion time upper bound from local time}).

\nc For \eqref{eq: cond exc clt bound}, set $z = \sqrt{u_l u_{l+1}}$ and $m = \frac{(\sqrt{2z}+v)^2}{2}$ in
\eqref{eq: cond traversal bessel dist} and use  the
estimate $I_{1}\left(z\right)\sim\frac{e^{z}}{\sqrt{2\pi z}}$
for $z\to\infty$, see \cite[8.451.5]{GR},
and Stirling's formula to see that for $v \in [a,b]$ and $z\geq z_{0}$ large enough
$$
\mathbb{Q}_{ 1}^{m_{0}}\left( \sqrt{2 T_l} =
\sqrt{2z} + v\,\Big|\,\mathcal{L}_{l}=u_{l},\mathcal{L}_{l+1}=u_{l+1}\right)
\ge
c \frac{e^{ -3v^2}}{\sqrt{z}}.
$$
 There are at least $c\sqrt{z}$ integers of the form $m = \frac{(\sqrt{2z}+v)^2}{2}$ for $v \in [a,b]$, so one obtains \eqref{eq: cond exc clt bound} for $z\geq z_{0}$. For smaller $z\leq z_{0}$ one simply uses the fact that the right-hand side of \eqref{eq: cond traversal bessel dist} is bounded away from zero for the finite number of  $m = \frac{(\sqrt{2z}+v)^2}{2}$ with $v \in [a,b]$ and $u_{l},u_{l+1}\in D_{a,b}   $.
\end{proof}

\section{\corDB{Galton-Watson process proofs}}
\label{sec-4}
\corDB{
We now have the necessary tools to prove the precise large deviation estimate Proposition \ref{prop: Almost LCLT}
and the barrier estimate Theorem \ref{thm: GW Barrier}
for the process $T_l$.} \corOF{ We start with the former.}
\begin{proof}[Proof of Proposition \ref{prop: Almost LCLT}]
Since $T_{L}=0$ is the event that none of $x^{2}/2$ independent
excursions from $1$ to $0$ of a simple random walk on $\left\{ 0,1,2,\ldots\right\} $
hit $L+1$, and this has probability \corON{$1-1/(L+1)$}, the estimate
(\ref{eq: LCLT zero}) follows using the assumption $x\le \corON{\eta L}$.

Turning to (\ref{eq: Almost LCLT}), we use that
\[
\mathbb{Q}_{1}^{x^{2}/2}\left(\sqrt{2T_{L}}\in
\corON{H_{y,\delta}}
\right)\le
\frac{\mathbb{Q}_{1}^{x^{2}/2}\left(\sqrt{2\mathcal{L}_{L+1}}
\in \corON{H_{y,2\delta}}
\right)}{\mathbb{Q}_{1}^{x^{2}/2}\left(\sqrt{2\mathcal{L}_{L+1}}\in
\corON{H_{y,2\delta}}
|\sqrt{2T_{L}}\in \corON{H_{y,\delta}}
\right)}.
\]
By (\ref{eq: one step loc time CLT}) we have that
\[
\mathbb{Q}_{1}^{x^{2}/2}\left(\sqrt{2\mathcal{L}_{L+1}}\in
\corON{H_{y,2\delta}}
|\sqrt{2T_{L}}\in \corON{H_{y,\delta}}
\right)\ge c>0,
\]
for a constant $c$ depending only on $\delta$. Also
\[
\mathbb{Q}_{1}^{x^{2}/2}\left(\sqrt{2\mathcal{L}_{L}}\in
\corON{H_{y,\delta/2}}
\right)\le\frac{\mathbb{Q}_{1}^{x^{2}/2}\left(\sqrt{2T_{L}}\in
\corON{H_{y,\delta}}
\right)}{\mathbb{Q}_{1}^{x^{2}/2}\left(\sqrt{2T_{L}}\in
\corON{H_{y,\delta}}
|\sqrt{2\mathcal{L}_{L}}\in\corON{H_{y,\delta/2}}
\right)},
\]
and by (\ref{eq: one set count clt}) 
\[
\mathbb{Q}_{1}^{x^{2}/2}\left(\sqrt{2T_{L}}\in
\corON{H_{y,\delta}}
|\sqrt{2\mathcal{L}_{L}}\in
\corON{H_{y,\delta/2}}
\right)\ge c>0,
\]
for a constant $c$ depending only on $\delta$. Thus we have shown
that
\begin{equation}
\begin{array}{ccl}
c\mathbb{Q}_{1}^{x^{2}/2}\left(\sqrt{2\mathcal{L}_{L}}\in
\corON{H_{y,\delta/2}}
\right) & \le & \mathbb{Q}_{1}^{x^{2}/2}\left(\sqrt{2T_{L}}\in
\corON{H_{y,\delta}}
\right)\\
 & \le & \frac{1}{c}\mathbb{Q}_{1}^{x^{2}/2}
 \left(\sqrt{2\mathcal{L}_{L+1}}\in
 \corON{H_{y,2\delta}}
 \right).
\end{array}\label{eq: sandwich}
\end{equation}
From Lemma \ref{lem: coupling} c) and (\ref{eq: Bessel Proc Marginal})
one obtains
\[
\begin{array}{lcl}
\mathbb{Q}_{1}^{x^{2}/2}\left(\sqrt{2\mathcal{L}_{L+1}}\in
\corON{H_{y,2\delta}}
\right) & = & \mathbb{Q}_{1}^{x^{2}/2}
\left(\mathbb{Q}_{2}^{\mathcal{L}_{1}}\left(\sqrt{2\mathcal{L}_{L}}\in
\corON{H_{y,2\delta}}
\right)\right)\\
 & = &\mathbb{Q}_{1}^{x^{2}/2}\left(P_{\sqrt{2\mathcal{L}_{1}}}^{Y}
 \left(Y_{L}\in\corON{H_{y,2\delta}}
 \right)\right)\\
 & = & \mathbb{Q}_{1}^{x^{2}/2}\left(\int_{y}^{y+2\delta}
 \frac{\sqrt{2\mathcal{L}_{1}}}{L}e^{-\frac{2\mathcal{L}_{1}+
 z^{2}}{2L}}I_{1}\left(\frac{\sqrt{2\mathcal{L}_{1}}z}{L}\right)dz\right).
\end{array}
\]

\corON{The function}
$I_{1}\left(z\right)$, see (\ref{besseries}), is continuous, and clearly non-zero for $z>0$, $I_{1}\left(z\right)\sim z/2$ for $z\to 0$ and  $I_{1}\left(z\right)\sim\frac{e^{\corON{z}}}{\sqrt{2\pi z}}$
for $z\to\infty$, see \cite[8.451.5]{GR}.
In particular, $I_{1}\left(z\right)\leq C\frac{e^{z}}{\sqrt{z }}$. Using 
our assumptions on $x,y$ and (\ref{eq: count  to loc time transition bound}),
one sees that
\[
\mathbb{Q}_{1}^{x^{2}/2}\left(\sqrt{2\mathcal{L}_{L+1}}\in
\corON{H_{y,2\delta}}
\right)\le c\sqrt{\frac{x/y}{L}}e^{-\frac{x^{2}+y^{2}}{2L}}e^{\frac{xy}{L}}.
\]
Similarly, since  $I_{1}\left(z\right)\geq C'\frac{e^{z}}{\sqrt{z }}$ for $z\geq \de>0$, it follows from  (\ref{eq: count  to loc time transition bound}) that if  $\corON{L/\eta}\leq xy $
\[
\mathbb{Q}_{1}^{x^{2}/2}\left(\sqrt{2\mathcal{L}_{L}}\in
\corON{H_{y,\delta/2}}
\right)\ge c'\sqrt{\frac{x/y}{L}}e^{-\frac{x^{2}+y^{2}}{2L}}e^{\frac{xy}{L}}.
\]
Together with (\ref{eq: sandwich}) this proves (\ref{eq: Almost LCLT}).
\end{proof}

\begin{proof}[Proof of Theorem \ref{thm: GW Barrier}]
 \corOF{ We begin with the proof of the upper bound
 (\ref{eq: GW curved barrier upper bound})}.
Fix $C>0$ and let 
\[
A=\left\{ f_{a, b}\left(l;L\right)-Cl_{L}^{\frac{1}{2}-\varepsilon}\le\sqrt{2T_{l}},l=1,\ldots,L-1,\sqrt{2T_{L}}\in
\corON{H_{y,\delta}}\right\}.
\]
We will first prove that
\begin{equation}
\mathbb{Q}_{1}^{x^{2}/2}\left(A\right)\le c\frac{\left(1+x-a\right)\left(1+y-b\right)}{L^{3/2}}\sqrt{\frac{x }{y}}\,e^{-\frac{\left(x-y\right)^{2}}{2L}} \corDB{\mbox{ for all }y\ge \sqrt{2}}. \label{eq: upper bound}
\end{equation}

Consider the event  
\bea
&&
B=\left\{ \sqrt{2\mathcal{L}_{1}}\ge \corDBa{a' + \frac{x-a}{2}},\sqrt{2\mathcal{L}_{l}}\ge f_{a'b}\left(l-1;L-1\right)-\bar{C}\left(l-1\right)_{L-1}^{\frac{1}{2}-\varepsilon},\right.\nn\\
&&\left. \hspace{1 in}\text{ for }l=2,\ldots,L-1;\sqrt{2\mathcal{L}_{L}}\ge\frac{y+b}{2}\right\}\nn
\eea
for a fixed $\bar{C}\ge1$ to be specified later, where
\be
a'=f_{a, b}\left(1;L\right),\text{ so that }f_{a'b}\left(l-1;L-1\right)=f_{a, b}\left(l;L\right).\label{a'def}
\ee
By Lemma \corON{\ref{lem: coupling}~b)},
\bea
&&
\mathbb{Q}_{1}^{x^{2}/2}\left(B\cap A\right)=\mathbb{Q}_{1}^{x^{2}/2}\lc 1_{A}\mathbb{Q}_{1}^{x^{2}/2}\left(\sqrt{2\mathcal{L}_{1}}\ge \corDBa{a' + \frac{x-a}{2}}\,\Big|\,T_{0},T_{1}\right)\right.\nn\\
&&
\times\left(\prod_{l=2}^{L-1}\mathbb{Q}_{1}^{x^{2}/2}\left(\sqrt{2\mathcal{L}_{l}}\ge f_{a'b}\left(l-1;L-1\right)-\bar{C}\left(l-1\right)_{L-1}^{\frac{1}{2}-\varepsilon}\,\Big|\,T_{l-1},T_{l}\right)\right)\nn\\
&&\left.\times\mathbb{Q}_{1}^{x^{2}/2}\left(\sqrt{2\mathcal{L}_{L}}\ge\frac{y+b}{2}\,\Big|\,T_{L-1},T_{L}\right)\rc.
\label{4.20}
\eea
On the event $A$ we have  
\[
\sqrt{T_{0}+T_{1} }\ge\frac{\sqrt{T_{0}}+\sqrt{T_{1}}}{\sqrt{2}}\ge \frac{x+a'-C}{2} \ge \corDB{a' + \frac{x-a}{2} - C - C'},
\]
\corDB{for a constant $C'$ depending only on $\eta$, where we used that $|a'-a| \le \eta $ by our assumptions on $x,y$.
Thus we have by (\ref{eq: cond loc time clt bound}) with say $\max(C+C',0)$ in place of $a$ and $\max(C+C',0)+1$ in place of $b$ that for some 
\corON{$c_{1}=c_{1}(C,\eta)>0$,}}
\be
1_{A}\mathbb{Q}_{1}^{x^{2}/2}\left(\sqrt{2\mathcal{L}_{1}}\ge a' + \frac{x-a}{2}\,\Big|\,T_{0},T_{1}\right)\ge 1_{A}c_{1}.\label{4.21}
\ee
Similarly for some $c_{2}>0$,
\be
1_{A}\mathbb{Q}_{1}^{x^{2}/2}\left(\sqrt{2\mathcal{L}_{L}}\ge \frac{y+b}{2}\,\Big|\,T_{L-1},T_{L}\right)\ge 1_{A}c_{2}.\label{4.22}
\ee
Note that for any $l'\leq L'$ with $L/2\leq L'\leq 2L$,
\be
f_{a, b}\left(l;L\right)=f_{a, b}\left(l';L'\right)+
(b-a)\!\left(\frac{l}{L}-\frac{l'}{L'}\right)\!=\!
f_{a, b}\!\left(l';L'\right)\!+
O_{\eta}\!\left(\left|l'-l\right|+\left|L-L'\right|\right),\label{4.22a}
\ee
and
\be
l_{L}^{\frac{1}{2}-\varepsilon}=\left(l'_{L'}\right)^{\frac{1}{2}-\varepsilon}+O_{\eta}\left(\left|l-l'\right|+\left|L-L'\right|\right),\label{4.22b}
\ee
which implies that on the event $A$,
\[
\sqrt{\frac{T_{l-1}+T_{l}}{2}}\ge\min\left(\sqrt{T_{l-1}},\sqrt{T_{l}}\right)\ge\frac{1}{\sqrt{2}}\left\{ f_{a, b}\left(l;L \right)-Cl_{L}^{\frac{1}{2}-\varepsilon}-c\right\} ,
\]
for $2\le l\le L-1$ for a constant $c$ depending only on \corON{$\eta$}
and $C$. Since $f_{a, b}\left(l;L \right)=f_{a', b}\left(l-1;L-1 \right)$ and $l_L,(l-1)_{L-1} \ge 1$ for $2 \le l \le L-1$ we have if we pick $\bar{C}$ large enough that on
$A$,
\[
\sqrt{\frac{T_{l-1}+T_{l}}{2}}\ge\frac{1}{\sqrt{2}}\left\{ f_{a'b}\left(l-1;L-1\right)-\frac{\bar{C}}{2}\left(l-1\right)_{L-1}^{\frac{1}{2}-\varepsilon}\right\} ,
\]
for $2\le l\le L-1$. Then by (\ref{eq: cond loc time LD bound})
we have
\bea 
&&
1_{A}\prod_{l=2}^{L-1}\mathbb{Q}_{1}^{x^{2}/2}\left(\sqrt{2\mathcal{L}_{l}}\ge f_{a'b}\left(l-1;L-1\right)-\bar{C}\left(l-1\right)_{L-1}^{\frac{1}{2}-\varepsilon}\,\Big|\,T_{l-1},T_{l}\right)\nn\\
&&
\ge1_{A}\prod_{l=2}^{L-1}\left(1-\exp\left(-\bar{C}^{2} \left(l-1\right)_{L-1}^{1-\varepsilon}\right)\right)\ge
\corON{\frac{1_A}{2}}, \label{4.23}
\eea
where the last inequality follows by making $\bar{C}$ large enough
(the choice can be made independent of $L$). Combining (\ref{4.20}), (\ref{4.21}), (\ref{4.22}) and (\ref{4.23}) we obtain $c\mathbb{Q}_{1}^{x^{2}/2}\left(A\right)\le\mathbb{Q}_{1}^{x^{2}/2}\left(A\cap B\right)$ with  $ c=
\corON{\frac{1}{2}c_1c_{2}}>0$,
so that 
\[
c\mathbb{Q}_{1}^{x^{2}/2}\left(A\right)\le\mathbb{Q}_{1}^{x^{2}/2}\left(A\cap B\right)\le\mathbb{Q}_{1}^{x^{2}/2}\left(B,\sqrt{2T_{L}}\in
\corON{H_{y,\delta}}\right).
\]
Recalling that $ T_{k}=Z_{2k+1}$ and $ \mathcal{L}_{k}=Z_{2k}$, the right-hand side can be written as   
\begin{eqnarray}
&&J_{1}:=\mathbb{Q}_{1}^{x^{2}/2}\left(\sqrt{2Z_{2}}\ge \corDB{a' + \frac{x-a}{2}},\sqrt{2Z_{2l}}\ge f_{a'b}\left(l-1;L-1\right)-\bar{C}\left(l-1\right)_{L-1}^{\frac{1}{2}-\varepsilon},\right.\nn\\
&&\left. \hspace{.5 in}\text{ for }l=2,\ldots,L-1;
\sqrt{2Z_{2L}}\ge\frac{y+b}{2}, \sqrt{2Z_{2L+1}}\in
\corON{H_{y,\delta}}\right).
\label{setup.1}
\end{eqnarray}
Using the Markov property 
\begin{eqnarray}
&&J_{1}=\mathbb{Q}_{1}^{x^{2}/2}\left(\sqrt{2Z_{2}}\ge\corDB{a' + \frac{x-a}{2}};\right.\label{setup.2}\\
&& \hspace{.5 in} \mathbb{Q}_{2 }^{Z_{2 }}\left(  \sqrt{2Z_{2l}}\ge f_{a'b}\left(l-1;L-1\right)-\bar{C}\left(l-1\right)_{L-1}^{\frac{1}{2}-\varepsilon}, \corON{l=2,\ldots,L-1};\right.\nn\\
&& 
\hspace{1in}
\left.\left.\sqrt{2Z_{2L}}\ge\frac{y+b}{2};\mathbb{Q}_{2L}^{Z_{2L}}
\left( \sqrt{2Z_{2L+1}}\in
\corON{H_{y,\delta}}\right)
\right)\right).\nn
\end{eqnarray}
By 
(\ref{eq: count  to loc time transition bound}),
for all \corDBa{$j\geq a' + \frac{x-a}{2}$,} 
\begin{equation}
 \mathbb{Q}_{1 }^{x^{2}/2}\left( \sqrt{2Z_{2}}\in H_{j}\right)\leq e^{-c\left(j-x\right)^{2} }, \label{setup.4}
\end{equation}
and by  (\ref{eq: loc time  to  count  transition bound}),
for all $k\geq \frac{y+b}{2}$,  
\begin{equation}
\sup_{v\in H_{k}}\mathbb{Q}_{2L}^{v^{2}/2}\left( \sqrt{2Z_{2L+1}}\in
\corON{H_{y,\delta}}
\right)\leq e^{-c\left(k-y\right)^{2} }. \label{setup.3}
\end{equation}
This shows that
\begin{eqnarray}
&&\hspace{-.2 in}J_{1}\leq   \sum_{j\ge\corDB{a' + \frac{x-a}{2}},
k\ge\frac{y+b}{2}}\sup_{z\in H_{j}}\mathbb{Q}_{2 }^{z^{2}/2}\left( 
\sqrt{2Z_{2l}}\ge f_{a'b}\left(l-1;L-1\right)-
\bar{C}\left(l-1\right)_{L-1}^{\frac{1}{2}-\varepsilon},\right.
\nn\\
&& \left.\hspace{.7 in} \text{ for }l=2,\ldots,L-1;\sqrt{2Z_{2L}}
\in H_{k}\right)e^{-c\left(j-x\right)^{2}}e^{-c\left(k-y\right)^{2} }.  \label{setup.5}
\end{eqnarray}
By (\ref{defq.2})
then
\begin{eqnarray}
&&K_{1}:=\mathbb{Q}_{2 }^{z^{2}/2}\left(  \sqrt{2Z_{2l}}\ge f_{a'b}\left(l-1;L-1\right)-\bar{C}\left(l-1\right)_{L-1}^{\frac{1}{2}-\varepsilon},\right.
\nn\\
&& \left.\hspace{1 in} \text{ for }l=2,\ldots,L-1;\sqrt{2Z_{2L}}\in H_{k}\right)
\label{setup.6}\\
&& =  P^Y_z\left( Y_{l-1}\ge f_{a'b}\left(l-1;L-1\right)-\bar{C}\left(l-1\right)_{L-1}^{\frac{1}{2}-\varepsilon},\right.
\nn\\
&& \left.\hspace{1 in} \text{ for }l=2,\ldots,L-1;Y_{L-1}\in H_{k} \right),\nonumber
\end{eqnarray}
where $Y_{t}$ is a $\mbox{Bes}^{0}(\sqrt{2u})$ process.
Thus using (\ref{eq: Bessel Proc Asymp UB})  we obtain that if $j,k\geq 1$ and  $|j-k|\leq 2\corON{\eta L}$
\begin{eqnarray}
&&K_{1}=P^Y_z\left( Y_l\ge f_{a'b}\left(l;L-1\right)-
\bar{C} l_{L-1}^{\frac{1}{2}-\varepsilon},\right.
\nn\\
&& \left.\hspace{1 in} \text{ for }l=1,\ldots,L-2;
Y_{L-1}\in H_{k}\right)
\label{setup.7}\\
&& \leq C   \frac{\left(1+z-a'\right)\left(1+k-b\right)}{\left(L-1\right)^{3/2}}\sqrt{\frac{z }{k}}\,\,e^{-\frac{\left(z-k\right)^{2}}{2\left(L-1\right)}}\nonumber
\end{eqnarray}
Bounding $K_{1}$ by $1$ when $|j-k|> 2\corON{\eta L}$,
it then follows  that $J_{1}$ is bounded by
\corON{
\bea
&&
\!\!
c\sum_{\stackrel{j\ge
  \corDB{a' + \frac{x-a}{2}}
}{k\ge\frac{y+b}{2}, 
|j-k|\leq 2
\eta L
}}\frac{\left(1+j-a'\right)\left(1+k-b\right)}{\left(L-1\right)^{3/2}}\sqrt{\frac{j}{k}}
e^{-\frac{\left(j-k\right)^{2}}{2\left(L-1\right)}}e^{-c\left(j-x\right)^{2}}e^{-c\left(k-y\right)^{2} }\nn\\
&&+\sum_{\stackrel{j\ge
  \corOF{a' + \frac{x-a}{2}}
}{k\ge\frac{y+b}{2},
|j-k|> 2\eta L}}e^{-c\left(j-x\right)^{2}}e^{-c\left(k-y\right)^{2} }.\label{4.26}
\eea
}
The first sum is bounded by
\begin{equation}
c\frac{\left(1+\corDB{\frac{x-a}{2}}\right)\left(1+\corDB{\frac{y-b}{2}}\right)}{L^{3/2}} \sqrt{\frac{x}{y}}\,e^{-\frac{\left(x-y\right)^{2}}{2L}}.\label{}
\end{equation}
As for the second sum, our assumption that $x, y\leq \corON{\eta L}$ together with $|j-k|> 2 \corON{\eta L}$  implies that either $|x-j|\geq  \corON{\eta L}
$ or $|y-k|\geq  \corON{\eta L}$, (or both), in which case the second sum in (\ref{4.26}) is bounded by
$ce^{ -c'L^{ 2}}$.
\corOF{From this \eqref{eq: upper bound} follows.}

The case $y=0$ requires special care. Note that when $y=b=0$ then
$\corON{L/\eta} \le a\le x\le
 \corON{\eta L}$ so $f_{a, b}\left(l;L\right)-Cl_{L}^{\frac{1}{2}-\varepsilon}>1$
 for all $l\le L-K$, for a constant $K$ depending on $C$ and $\corON{\eta}$.
Also for a large enough constant $C'$, we have for all $1\le k\le K$
that  
\[
f_{a0}\left(l;L\right)-Cl_{L}^{\frac{1}{2}-\varepsilon}\ge f_{a0}\left(l;L-k\right)-C'l_{L-k}^{\frac{1}{2}-\varepsilon}\text{\,\,\,for }l=1,\ldots,L-k-1.
\]
Thus letting
\[
A_{k}=\left\{\sqrt{2T_{l}}\geq  f_{a, b}\left(l;L-k\right)-C'l_{L-k}^{\frac{1}{2}-\varepsilon}\text{\, \,\,for }l=1,\ldots,L-k-1\right\} ,
\]
we have
\[
  \mathbb{Q}_{1}^{x^{2}/2}\left(A\right)\le\sum_{k=1}^{K}\mathbb{Q}_{1}^{x^{2}/2}\left(A_{k};\sqrt{2T_{L-k}}>1,T_{L-k+1}\in H_{0,\de\vee1} \right).
\]
Using the Markov property we have 
\[
\mathbb{Q}_{1}^{x^{2}/2}\left(A_{k};\sqrt{2T_{L-k}}>1,T_{L-k+1}\in 
H_{0,\de\vee1}\right)
\le\sum_{j=1}^{ \ff}\mathbb{Q}_{1}^{x^{2}/2}\left(A_{k};\sqrt{2T_{L-k}}\in
\corON{H_j}
\right)e^{-\frac{j^{2}}{2}}.
\]
Now applying (\ref{eq: upper bound}) with $L-k$ in place of $L$ and using 
$ \corON{L/\eta}\le a\le x$
gives that this sum is at most
\[
\begin{array}{l}
\sum_{j=1}^{ \ff}c\frac{\left(1+x-a\right)\left(1+j\right)}{\left(L-k\right)^{3/2}} \sqrt{\frac{x}{j}}\,    e^{-\frac{\left(x-j\right)^{2}}{2\left(L-k\right)}}e^{-\frac{j^{2}}{2}}\\
\hspace{1cm}
\le c\frac{\left(1+x-a\right)}{\left(L-k\right) }e^{-\frac{x^{2}}{2\left(L-k\right)}}\le c\frac{\left(1+x-a\right)}{L }e^{-\frac{x^{2}}{2L}},
\end{array}
\]
for $1\le k\le K$, so the desired upper bound follows also in the
case $y=0$. In general, if $y\leq \sqrt{2}$ then $\left[y,y+\delta\right]\subseteq \left[0,\sqrt{2}+\delta\right]$ so the upper bound follows from the case $y=0$.
This completes the proof of the upper bound (\ref{eq: GW curved barrier upper bound}).

We now  prove the lower bound. Recall that $x\geq \sqrt{2}$ and we first assume that also $y\geq \sqrt{2}$.
Let \nc
\[\mbox{{\it Tube}}_{C,\wt C }(l,L)=\{z\,|\,f_{a, b}\left(l;L\right)+Cl_{L}^{\frac{1}{2}-\varepsilon}\leq z\leq f_{x, y}\left(l;L\right)+\wt C l_{L}^{\frac{1}{2}+\varepsilon} \}\]
and set
\[
  D=\left\{ \sqrt{2T_{l}}\in \mbox{{\it Tube}}_{C,\wt C}(l,L),l=1,\ldots,L-1,\sqrt{2T_{L}}\in \corON{H_{y,\delta}}
\right\}.
\]
We claim that
\begin{equation}
\mbox{{\it Tube}}_{C,\wt C}(l,L)\cap\sqrt{2\mathbb{Z}^{+}}\ne\emptyset,\,\, \mbox{for all }l=1,\ldots,L-1, \label{pass.1}
\end{equation}
so that $D\ne\emptyset$. By the concavity of the square root, it suffices to show that $\wt C l_{L}^{\frac{1}{2}+\varepsilon}-Cl_{L}^{\frac{1}{2}-\varepsilon}\geq \sqrt{2}$ for all $l=1,\ldots,L-1$. 
This is certainly true for $l=1$ or $l=L-1$ by our requirements  that 
\corOF{imply that}
$\wt C  \geq C+\sqrt{2}$, and the general case 
follows since $f\(   z\)= \wt C z^{\frac{1}{2}+\varepsilon}-Cz^{\frac{1}{2}-\varepsilon}$ is monotone increasing in $z\geq 1$.

We now  show   that for $x,y\geq \sqrt{2}$  
\begin{equation}
\mathbb{Q}_{1}^{x^{2}/2}\left(D\right)\ge c\frac{\left(x-a\right)\left(y-b\right)}{L}\sqrt{\frac{x/y}{L}}\,e^{-\frac{\left(x-y\right)^{2}}{2L}}.\label{eq: lower bound}
\end{equation}

To this end consider the event  
\bea
&&
E =\left\{ \sqrt{2\mathcal{L}_{1}}\in H_{x}, \sqrt{2\mathcal{L}_{l}}\in \mbox{{\it Tube}}_{C',  C''}(l,L),l=2,\ldots,L-1; \right.\nn\\
&&\left. \hspace{2.5 in}\sqrt{2\mathcal{L}_{L}}\in\left[y+\frac{\de}{3},y+\frac{2\de}{3}\right] \right\},\nn
\eea
where $C'=C+\de/3$ and $  C''= C+\de $, so that $ C''-  C'\geq 2\de/3.$

We show below that for some   $\wh c>0$
\be
\mathbb{Q}_{1}^{x^{2}/2}\left(D \cap E \right)\ge \wh c\,\,\mathbb{Q}_{1}^{x^{2}/2}\left(E\right),\label{lbound.12}
\ee
from which it will follow that
\be
\mathbb{Q}_{1}^{x^{2}/2}\left(D \right)\ge\wh c\,\,\mathbb{Q}_{1}^{x^{2}/2}\left(E\right).\label{lbound.12z}
\ee
As before, using the Markov property
\begin{eqnarray}
&&\mathbb{Q}_{1}^{x^{2}/2}\left(E\right)=\mathbb{Q}_{1}^{x^{2}/2} \left( \sqrt{2\mathcal{L}_{1}}  \in H_{x}, \mathbb{Q}_{2}^{\mathcal{L}_{1}} \left(\sqrt{2\mathcal{L}_{l}}\in \mbox{{\it Tube}}_{C',C''}(l,L),\right.\right.\nn\\
&&\left.\left. \hspace{1.5 in}l=2,\ldots,L-1; \sqrt{2\mathcal{L}_{L}}\in\left[y+\frac{\de}{3},y+\frac{2\de}{3}\right] \right)\right).\nn
\label{setup.20}\\
&& \geq 
\mathbb{Q}_{1}^{x^{2}/2} \left( \sqrt{2\mathcal{L}_{1}}  \in H_{x}\right)
\inf_{z\in H_{x}} 
\mathbb{Q}_{2}^{z^{2}/2} \left(\sqrt{2\mathcal{L}_{l}}\in \mbox{{\it Tube}}_{C',C''}(l,L),\right. \nn\\
&&\left.  \hspace{1.5 in}l=2,\ldots,L-1; \sqrt{2\mathcal{L}_{L}}\in\left[y+\frac{\de}{3},y+\frac{2\de}{3}\right] \right)\nn\\
&& =: 
\mathbb{Q}_{1}^{x^{2}/2} \left( \sqrt{2\mathcal{L}_{1}}  \in H_{x}\right)
\corOF{\inf_{z\in H_{x}} K_2(z)}\,. 
\end{eqnarray}
By (\ref{eq: one step loc time CLT}) we have that  $\mathbb{Q}_{1}^{x^{2}/2} \left( \sqrt{2\mathcal{L}_{1}}  \in H_{x}\right)\geq c>0$ uniformly in all possible $x$.  Once again, by (\ref{defq.2}), if $Y=Y_{t}$ is a $\mbox{Bes}^{0}(\sqrt{2u})$ then
\corOF{\begin{eqnarray}
K_2(z)& = & P\left( Y_{l}\in \mbox{{\it Tube}}_{C',C''}(l+1,L),\right.
\label{setup.21}
\\
&& \left.\hspace{1 in} l=1,\ldots,L-1,Y_{L}\in \left[y+\frac{\de}{3},y+\frac{2\de}{3}\right]\,\Big |\, Y_{0}=z\right),
\nn
\end{eqnarray}
}
and using the Markov property and then (\ref{eq: Bessel Proc Asymp UB}) 
\corOF{(with $y$
replaced by $y+\delta/3$ and with $\delta$ taken as $2\delta/3$),}
which requires $x,y\geq 1$ and $C''-  C'\geq 2\de/3$,  we obtain  
\corON{\be
\label{setup.22}
K_2 \geq     c\frac{\left(1+x -a\right)\left(1+y-b\right)}{L-1}\sqrt{\frac{x/y}{L-1}}\,e^{-\frac{\left(x-y\right)^{2}}{2(L-1)} },
\ee}
which because of our assumptions on $x,y,a,b$ is greater than or equal to the right-hand
side of (\ref{eq: lower bound}). Together with (\ref{lbound.12z}) this proves (\ref{eq: lower bound})  for $x,y\geq \sqrt{2}$.

Assume now that $y\leq \sqrt{2}$, which by our assumption requires that $x\geq 
\corON{L/\eta}$. The tube at $l=L-1$ contains some interval $\left[z,z+\bar\delta\right]$ with $z\geq \sqrt{2}$ such that  $\left[z,z+\bar\delta\right]\cap\sqrt{2\mathbb{Z}^{+}}\neq \emptyset$. We then apply (\ref{eq: lower bound}) for the interval of length $L-1$ with \corON{$\sqrt{2\mathcal{L}_{L-1}}\in\left[z,z+\bar\delta\right] $}. Since now $\sqrt{\frac{x/z}{L-1}}\asymp 1$, it is easy to see that our desired lower bound follows from the fact that there is some positive probability to go from $\left[z,z+\bar\delta\right]$ to some point in $\left[y,y+\delta\right]\cap\sqrt{2\mathbb{Z}^{+}}$ in a single step.

We now turn to the proof of (\ref{lbound.12}).
\corOF{By part c) of Lemma \ref{lem: coupling},} 
\bea
\mathbb{Q}_{1}^{x^{2}/2}\left(D \cap E \right)&=&\mathbb{Q}_{1}^{x^{2}/2}\lc1_{E }\mathbb{Q}_{1}^{x^{2}/2}\left(\sqrt{2T_{1}}\in \mbox{{\it Tube}}_{  C, \wt C}(1,L)\,\Big|\,\mathcal{L}_{1},\mathcal{L}_{2}\right)\right.\nn\\
&&\times\left(\prod_{l=2}^{L-1}\mathbb{Q}_{1}^{x^{2}/2}\left(\sqrt{2T_{l}}\in \mbox{{\it Tube}}_{  C, \wt C}(l,L)\,\Big|\,\mathcal{L}_{l },\mathcal{L}_{l+1}\right)\right)\nn\\
&&\left.\times\mathbb{Q}_{1}^{x^{2}/2}\left(\sqrt{2T_{L}}\in H_{y}\,\Big|\,\mathcal{L}_{L } \right)\rc.\label{4.30}
\eea

Recall from (\ref{eq: cond traversal bessel dist}) that  the law of $\sqrt{2T_{l}}$ depends only on the product  $\mathcal{L}_{l }\mathcal{L}_{l+1}$. Note that
\begin{equation}
|f_{x, y}\left(l+1;L\right)-f_{x, y}\left(l;L\right)|\leq \eta\label{}
\end{equation}
with a similar bound for $f_{a, b}\left(l;L\right)$.
It follows   that on the event $E $
\be
\left\{ f_{a, b}\left(l;L\right)+C'l_{L}^{\frac{1}{2}-\varepsilon}-c'\right\}\le \(4\mathcal{L}_{l }\mathcal{L}_{l+1}\)^{ 1/4} \le \left\{ f_{x, y}\left(l;L\right)+C'' l_{L}^{\frac{1}{2}+\varepsilon}+ c''\right\}, \label{4.34}
\ee
for $2\le l\le L-1$ with $c'=\eta+C'$ and $c''=\eta+C''$. Thus  we have that on
$E $
\be
\{ |\sqrt{2T_{l}}- \(4\mathcal{L}_{l }\mathcal{L}_{l+1}\)^{ 1/4}|\leq  \frac{\de}{10}l_{L}^{\frac{1}{2}-\varepsilon}\}\subseteq \{   \sqrt{2T_{l}}\in \mbox{{\it Tube}}_{  C, \wt C}(l,L)\}, \label{4.35}
\ee
for $k\le l\le L-k$ for some fixed $k$ independent of $L$. Then by (\ref{eq: excursion time upper bound from local time})
we have
\bea 
&&
1_{E }\prod_{l=k}^{L-k}\mathbb{Q}_{1}^{x^{2}/2}\left(\sqrt{2T_{l}}\in \mbox{{\it Tube}}_{  C, \wt C}(l,L)\,\Big|\,\mathcal{L}_{l },\mathcal{L}_{l+1}\right)\nn\\
&&
\ge1_{E }\prod_{l=k}^{L-k}\left(1-\exp\left(-c \left(\frac{\de}{10}\right)^{2}l _{L}^{1-2\varepsilon}\right)\right)\ge1_{E }c_{\ast}, \label{4.33}
\eea
for some $c_{\ast}>0$  independent of $L$. 

 Consider now  $2\leq l\leq k$ or $L-k\leq l\leq L-1$. 
By  (\ref{4.34}), on the event $E $ we have 
  \be
\{  \sqrt{2T_{l}}\in \(4\mathcal{L}_{l }\mathcal{L}_{l+1}\)^{ 1/4}+[r,s]\}\subseteq \{   \sqrt{2T_{l}}\in \mbox{{\it Tube}}_{  C, \wt C}(l,L)\}, \label{4.35f}
\ee
with\[  [r,s]=[\eta+C',\eta+C''+\sqrt{2}].  \]
 Hence  using (\ref{eq: cond exc clt bound}) we see that for any
 \be
1_{E }\mathbb{Q}_{1}^{x^{2}/2}\left(\sqrt{2T_{l}}\in \mbox{{\it Tube}}_{ C, \wt C}(l,L)\,\Big|\,\mathcal{L}_{l},\mathcal{L}_{l+1}\right)\ge 1_{E }c_{l},\label{4.31b}
\ee
for some $c_{l}>0$ which depends on \corON{$\eta$}, $\de$ and $C$.

 In addition, on the event $E $ we have 
\bea
  &&
\sqrt{a}\sqrt{  f_{a, b}\left(2;L\right)+C'   }\le \(4\mathcal{L}_{1}\mathcal{L}_{2}\)^{ 1/4}\nn\\
&&\hspace{2 in} \le \sqrt{x+1}\sqrt{  f_{x, y}\left(2;L\right)+2C''   }.\nn
\eea 
Thus, as in the last paragraph,  we have by (\ref{eq: cond exc clt bound}) that 
\be
1_{E }\mathbb{Q}_{1}^{x^{2}/2}\left(\sqrt{2T_{1}}\in \mbox{{\it Tube}}_{ C, \wt C}(1,L)\,\Big|\,\mathcal{L}_{1},\mathcal{L}_{2}\right)\ge 1_{E }c_{1},\label{4.31a}
\ee
for some $c_{1}>0$ which depends on \corON{$\eta$}, $\de$ and $C$.

Finally, on the event $E $ we have  
\[
\sqrt{2\mathcal{L}_{L}}\in\left[y+\frac{\de}{3},y+\frac{2\de}{3}\right].
\]
For all $y\geq y_{0}$ sufficiently large, for any $v\in \left[y+\frac{\de}{3},y+\frac{2\de}{3}\right]$, we will have that $v+[0,\de/3]\cap\sqrt{2\mathbb{N}}\neq \emptyset$.
Then we have by (\ref{eq: one set count clt}) that for some  
for some $c_{L}>0$
\be
1_{E }\mathbb{Q}_{1}^{x^{2}/2}\left(\sqrt{2T_{L}}\in H_{y,\de}\,\Big|\,\mathcal{L}_{L } \right)\ge 1_{E }c_{L}.\label{4.32}
\ee 
On the other hand, we know from Lemma \ref{lem: coupling} a) that $T_{L}$ conditioned on $\mathcal{L}_{L}=u$ is Poisson with parameter $u$, and since by our assumption $H_{y,\de}\cap\sqrt{2\mathbb{Z}^{+}}\neq \emptyset$, it follows that for some possibly smaller $c_{L}>0$, (\ref{4.32}) also holds if $y\leq y_{0}$.

This completes the proof of (\ref{lbound.12}) with $\wh c=c_{\ast}\prod_{l=1}^{k}c_{l}\prod_{j=L-k}^{L}c_{j}>0$, and hence of
(\ref{eq: lower bound}).
\end{proof}

\section{Tree cover time}
\label{sec-5}
\nc
\corDBa{In this section we prove Theorem \ref{thm: tree cover main} about the tightness of the cover time of the regular tree.}
\corO{We begin this section
	by introducing notation. Recall that \corOF{$\mathcal{T}_L$ 
	denotes a rooted tree, with root $\rho$ to 
	which one attaches a standard binary tree of depth $L$. 
      Recall also that }
	$\mathbb{P}$ is the probability of simple random walk on this tree starting from \corDB{$\rho$}. For a vertex $v\in \mathcal{T}_L$,
	we write $|v|$ for the tree distance of $v$ from the root \corDB{$\rho$}. We define 
	level $l$ of the tree $(l=-1, 0,1,\ldots,L)$ as those vertices at distance 
	$l+1$ from \corDB{$\rho$}, and write $\ll_l$ for the vertices of 
	$\mathcal{T}_L$ at level $l$. For $y\in \ll_L$, we let $[y]_l$ denote 
	the ancestor of $y$ at level $l$, i.e. the unique vertex in $\ll_l$ on the geodesic connecting $y$ and \corDB{$\rho$};  
	thus $\left[y\right]_{-1}$ is the root \corDB{$\rho$}.}

Let $D_{n}$ be the time of the $n$-th return to the root \corDB{$\rho$}. For each
\corO{$y\in \ll_{L}$}, define
\[
T_{l}^{y,n}=\text{\# of traversals of the edge }\left[y\right]_{l-1}\to\left[y\right]_{l}\text{ up to time }D_{n}.
\]
The count $T_{l}^{y,n}$ is the edge local time of the directed edge
$\left(\left[y\right]_{l-1},\left[y\right]_{l}\right)$ accumlated
during the first $n$ excursions from the root. Note that for 
\corO{
	$y\in \ll_L$} we have 
\begin{equation}
\hit_{y}>D_{n}\text{ if and only if }T_{L}^{y,n}=0.\label{eq: hitting time traversal count connection}
\end{equation}

When we observe the random walk on the tree only when it is on the
path from $\rho$ to  a leaf \corO{$y\neq \rho$}, 
we obtain a one dimensional \corDB{simple} random
walk on \corDB{$\{-1, 0,1,\ldots,L\}$}. As such the following is simply
a restatement of the classical fact that the directed edge local times
of 1D simple random walk are a Markov process, and more precisely
a critical Galton--Watson with geometric offspring distribution (this
is the discrete equivalent of the \corDB{second} Ray-Knight theorem).

\bl\label{lem: disc ray-knight}
For each $n$ and each leaf $\corON{y\in\ll_L}$ the process $\left(T_{l}^{y,n}\right)_{l=0,\ldots,L}$ has \corDB{law $P_n$}, i.e. is a Galton--Watson process with offspring distribution given by the
geometric law with mean $1$.
\el

Also note that
\begin{equation}\label{eq: reduced complexity}
	\begin{array}{c}
		\mbox{if $y,z$ are two different leaves 
		  in $\corOF{\ll_L}$
		whose paths from root to leaf}\\
		\mbox{ overlap up to level $k$, then $T_{l}^{y,n}=T_{l}^{n,z}$ for $l\le k$,}
	\end{array}
\end{equation}
and conditionally on $T_{k}^{y,n}=T_{k}^{n,z}$ the
processes $\left(T_{l}^{y,n}\right)_{l\ge k},\left(T_{l}^{n,z}\right)_{l\ge k}$
are independent. \corDBa{Thus,} in fact, the $T_{l}^{y,n}$ are a collection of branching
Galton--Watson process, like a branching random walk where the random
walk is replaced by the Galton--Watson process. This analogy allows
us to use methods from branching random walks to prove Theorem \ref{thm: tree cover main}.

Let $\left(T_{l}\right)_{l\ge0}$ denote a generic Galton--Watson process
with this offspring distribution. A basic bound for the increments
of $T_{l}$ is given by the following
\begin{equation}
\corO{P_n\left(\left|\sqrt{2T_{l}}-\sqrt{2T_{0}}\right|\ge z
	\right)\le ce^{-\frac{z^{2}}{2l}},\quad z\geq 0}\label{eq: tail bound}
\end{equation}
which is easily proved by computing the exponential moments of $T_{l}$
using the representation of $T_{l}$ as the sum of $T_{0}$ random
variables, each being the product of a Bernoulli with parameter $\frac{1}{l}$
and a Geometric with parameter $\frac{1}{l}$, see \cite[Lemma 4.6]{BK}
\corDB{(for $n = O(l^2)$ and $z=O(l)$ it is a special case of the more precise bound \eqref{eq: Almost LCLT})}.

\corO{Throughout this section we set
	$$\kappa =\kappa_L= \sqrt{2\log2}-\frac{\log L}{\sqrt{2\log2}L}. $$
}
The main step in proving the upper bound \corDB{ \eqref{eq: upper tail bound} on the right tail} 
is the following proposition
which gives an upper bound in terms of excursion counts - later we
relate the excursion counts to the actual clock of the random walk
$X_{n}$ to obtain from this the upper bound of Theorem \ref{thm: tree cover main}.
\begin{proposition}
	\label{prop: Upper Bound cover} There is a constant $c$ such that
	for all $L\ge1$ and $x\ge1$ we have
	\begin{equation}
	\corDB{\mathbb{P}}
	\left(\min_{\corO{y\in \ll_L}}T_{L}^{y,n}=0\right)\le 
	\corO{cx e^{-\kappa 
			x}},\label{eq: right tail in terms of excursions}
	\end{equation}
	where
	\corO{  $\sqrt{2n}=
		\kappa L+x$.}
\end{proposition}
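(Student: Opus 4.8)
The plan is to run a first-moment (union) bound over the leaves, using the Galton--Watson barrier estimates of Theorem~\ref{thm: GW Barrier} to control the contribution of each leaf. By Lemma~\ref{lem: disc ray-knight} and \eqref{eq: hitting time traversal count connection}, for a fixed leaf $y$ the event $\{T_L^{y,n}=0\}$ has probability $1/(L+1)$, so a naive union bound over the $2^L$ leaves is off by a factor of roughly $2^L/L$; the whole point is that $T_L^{y,n}=0$ forces the path process $l\mapsto \sqrt{2T_l^{y,n}}$ to descend from $\sqrt{2n}\approx \kappa L + x$ at level $0$ down to $0$ at level $L$, and to do so it must dip below various linear barriers, which is exponentially costly and exactly cancels the entropy $2^L$. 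This is the standard branching-random-walk second-moment-method heuristic, here with the random walk replaced by the critical GW process.

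Concretely, first I would discretize: replace the event $\{\min_{y\in\ll_L} T_L^{y,n}=0\}$ by a union, over leaves $y$ and over the ``last level'' $k$ at which $\sqrt{2T_k^{y,n}}$ is still large, of barrier events for the single path $(T_l^{y,n})_{l=0,\ldots,L}$. A cleaner route: introduce the straight line $f_{x_0,0}(l;L)$ from $x_0:=\sqrt{2n}=\kappa L+x$ down to $0$, and use the fact that if $T_L^{y,n}=0$ then either (i) $\sqrt{2T_l^{y,n}}$ stays below $f_{x_0,0}(l;L)+Cl_L^{1/2-\varepsilon}$ plus a ``curved'' correction for all $l$ — a barrier-crossing event whose probability is controlled by Theorem~\ref{thm: GW Barrier}~a) with $a=x\wedge x_0$, $b=0$, $y=0$ (the $y\le\sqrt 2$ branch, noting $a\ge L/\eta$ holds since $x_0\asymp L$) — or (ii) the path exceeds that barrier somewhere, in which case one peels off at the first such level $m$, pays the (summable over $m$) cost $e^{-cm_L^{1-2\varepsilon}}$ of the excess fluctuation via \eqref{eq: tail bound}, and applies the barrier estimate on the remaining stretch $[m,L]$. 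Summing the resulting bound $c\,\frac{(1+x_0-a)(1+0-0)}{L}\cdot 1\cdot e^{-(x_0)^2/2L}$ over the $2^L = e^{L\log 2}$ leaves, and using $x_0^2/2L = \tfrac12(\kappa L+x)^2/L$, a direct computation gives $x_0^2/2L - L\log 2 = \kappa x + O(1)$ when $\kappa = \sqrt{2\log 2} - \tfrac{\log L}{\sqrt{2\log 2}\,L}$ — indeed this is precisely the choice of $\kappa$ that makes $2^L e^{-\kappa^2 L/2} \asymp 1/L$, so that the prefactor $(1+x_0-a)/L = (1+x)/L$ times $2^L e^{-x_0^2/2L}$ collapses to $cx e^{-\kappa x}$.

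The main subtlety — and the step I expect to be the real obstacle — is that the naive union bound over \emph{all} $2^L$ leaves double-counts massively, because by \eqref{eq: reduced complexity} the path processes of two leaves sharing an ancestor up to level $k$ are \emph{identical} up to level $k$. So one cannot simply multiply a single-path barrier probability by $2^L$; instead one must organize the sum as a sum over the tree, weighting level-$k$ vertices by their $2^{L-k}$ descendant leaves, i.e. bound $\mathbb{P}(\min_y T_L^{y,n}=0)$ by $\sum_{k} 2^{L-k}\,\mathbb{P}(\text{path from }0\text{ to }k\text{ stays below barrier, and then }T_L=0\text{ along one extension})$ — but this is exactly the same entropy-versus-energy balance and, because the barrier cost is convex in the endpoint, the worst case is $k=L$, recovering the bound above up to constants. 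Handling the entropic sum over $k$ (and over the ``peeling'' level $m$ in case (ii)) so that it converges uniformly, and checking the endpoint arithmetic that turns $x_0^2/2L - L\log 2$ into $\kappa x + O(1)$, are the two places where care is needed; everything else is an application of Theorem~\ref{thm: GW Barrier}~a) and the crude increment bound \eqref{eq: tail bound}.
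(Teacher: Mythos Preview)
Your overall strategy is the paper's: split according to whether the path $l\mapsto\sqrt{2T_l^{y,n}}$ stays above a linear barrier $\alpha(l)=\kappa(L-l)-l_L^{1/6}$, apply Theorem~\ref{thm: GW Barrier}~a) to the ``stays above, ends at $0$'' part, and handle the ``drops below'' part by peeling off at the first crossing level. But several concrete details in your write-up are inverted or inconsistent, and as stated the argument would not close.

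First, the barrier direction. Theorem~\ref{thm: GW Barrier}~a) bounds the probability that the process stays \emph{above} a lower barrier and lands in a terminal window; your case~(i) speaks of ``stays below $f_{x_0,0}(l;L)+Cl_L^{1/2-\varepsilon}$'', which is not the event the theorem controls. In the paper the split is the other way: (II) is ``path stays above $\alpha(l)$ for all $l$ and $T_L=0$'' (a direct application of Theorem~\ref{thm: GW Barrier}~a) on the full window $[0,L]$, then union over $2^L$ leaves), and (I) is ``path drops below $\alpha$ at some level $k+1$''.

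Second, the barrier parameter. You take $a=x\wedge x_0=x$, but then write $(1+x_0-a)/L=(1+x)/L$; these are inconsistent, since $x_0-x=\kappa L$. The correct choice is $a=\kappa L$ (i.e.\ the barrier $\alpha(l)\approx f_{\kappa L,0}(l;L)$), which gives $x_0-a=x$ and hence the crucial prefactor $(1+x)/L$. With $a=x$ the prefactor is $(1+\kappa L)/L\asymp 1$ and the final bound is off by a factor of order $L/x$.

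Third, the entropy count for the peeling. For the ``drops below at level $k+1$'' part, the event depends only on the ancestor at level $k+1$ by \eqref{eq: reduced complexity}, so the union bound carries a factor $2^{k}$, not $2^{L-k}$. The paper's (I) is $\sum_{k}2^k P_n\bigl(\sqrt{2T_l}\ge\alpha(l),\,l\le k;\ \sqrt{2T_{k+1}}<\alpha(k+1)\bigr)$; one conditions on $\sqrt{2T_k}\in H_{\alpha(k)+j}$, applies Theorem~\ref{thm: GW Barrier}~a) on $[0,k]$ (using the elementary inequality $l_L^{1/6}\le k_L^{1/6}+l_k^{1/6}$ to reduce to a barrier $f_{a,b}(\cdot;k)-l_k^{1/6}$ with $a=\kappa L-k_L^{1/6}$, $b=\kappa(L-k)-k_L^{1/6}$), and uses \eqref{eq: tail bound} for the one-step drop. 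The resulting summand is at most $c(x+k_L^{1/6})e^{-\kappa x-\kappa k_L^{1/6}}$, which sums over $k$ to $cx e^{-\kappa x}$.
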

\begin{proof}
	The probability in (\ref{eq: right tail in terms of excursions})
	is at most
	\corO{
		\begin{eqnarray}
		&&\underset{\text{(I)}}{\underbrace{
				\corDB{\mathbb{P}}
				\left(\exists y\in\ll_L,l\leq L-1\text{ s.t. }\sqrt{2T_{l}^{y,n}}
				< \alpha\left(l\right)\right)}}\\
		&& \;+\underset{\text{(II)}}{\underbrace{
				\corDB{\mathbb{P}}
				\left(\exists y\in \ll_L\text{ s.t. }\sqrt{2T_{l}^{y,n}}\ge \alpha\left(l\right),l=1,\ldots,L-1,T_{L}^{y,n}=0\right)},}\label{eq: upper bound split}
		\nonumber
		\end{eqnarray}
	}
	where we have split the event according to whether or not the processes
	$T_{l}^{y,n}$ stay above the barrier 
	\corO{
		\[
		\alpha\left(l\right)=\kappa\left(L-l\right)-l_L^{1/6},
		\]
	}
	(as typically they should).
	
	By a union bound over the levels of the tree we have,
	\begin{equation}
	\corO{\text{(I)}\le\sum_{k=1}^{L-2}\mathbb{P}\left(\sqrt{2T_{l}^{y,n}} \ge \alpha\left(l\right),l=1,\ldots,k,\sqrt{2T_{k+1}^{y,n}}<\alpha\left(k+1\right)\right).}\label{eq: sum over levels}
	\end{equation}
	\corDB{
	By Lemma \ref{lem: disc ray-knight}, \eqref{eq: reduced complexity} and a union bound over the $2^k$ vertices in the $k$-th level of the tree the $k$-th summand is at most
	\begin{equation}
		2^{k}\corDB{P_n}\left(\sqrt{2T_{l}} \ge \alpha\left(l\right),l=1,\ldots,k,\sqrt{2T_{k+1}}<\alpha\left(k+1\right)\right).\label{eq: sum over levels 2}
	\end{equation}}
	We condition on the height of the process at time $k$ to obtain an
	upper bound of
	\begin{equation}
	\begin{array}{c}
	\sum_{j\ge0}{\corDB{P_n}}\left(\sqrt{2T_{l}}\ge \alpha\left(l\right),l=1,\ldots,k-1,\sqrt{2T_{k}}\in H_{\alpha\left(k\right)+j}\right)\\
	\quad\quad\quad\quad\quad\times
	\sup_{z\in H_{\alpha\left(k\right)+j}}\corDB{P_{\frac{z^2}{2}}}\left(\sqrt{2T_{1}}\le \alpha\left(k+1\right) \right),
	\end{array}\label{eq: cond on height}
	\end{equation}
	for \corO{the probability in the
		summand in (\ref{eq: sum over levels})}. \corO{By (\ref{eq: tail bound}),
		\begin{equation}
		\label{eq-I5.6}
		\sup_{z\in H_{\alpha\left(k\right)+j}}\corDB{P_{\frac{z^2}{2}}}\left(\sqrt{2T_{1}}\le \alpha\left(k+1\right) \right)\leq
		ce^{-j^{2}/2}.
		\end{equation}
	}
	\corO{To bound the first probability in \eqref{eq: cond on height}, we
		use
		(\ref{eq: GW curved barrier upper bound}) 
		with modified parameters, as follows.
		First, note that for any $\corDBa{\gamma}<1$, 
		$$ l_L^{\corDBa{\gamma}}\leq k_L^{\corDBa{\gamma}}+l_k^{\corDBa{\gamma}}.$$
		Setting,
		for $l=0,\ldots,k$, 
		$\hat \alpha(l)= f_{a,b}(l,L)-l_k^{1/6}$ with $a=\kappa L-k_L^{1/6}$ and 
		$b=\kappa(L-k)-k_L^{1/6}$, 
		we thus conclude that
		$\hat \alpha(l)\leq \alpha(l)$ with $\alpha(k)=\hat \alpha(k)$. Therefore,
		\begin{eqnarray}
		\label{eq-ofer1}
		&&\corDB{P_n}\left(\sqrt{2T_{l}}\ge \alpha\left(l\right),l=1,\ldots,k-1,\sqrt{2T_{k}}\in H_{\alpha\left(k\right)+j}\right) \\
		&&\leq 
		\corDB{P_n}\left(\sqrt{2T_{l}}\ge \hat \alpha\left(l\right),l=1,\ldots,k-1,\sqrt{2T_{k}}\in H_{\hat \alpha\left(k\right)+j}\right).
		\nonumber \end{eqnarray}
	}
	\corO{We now apply (\ref{eq: GW curved barrier upper bound}) with $L$ 
		replaced by $k$, \corDB{the $x$ of (\ref{eq: GW curved barrier upper bound}) replaced by $\sqrt{2n}$, $y=\hat \alpha (k)+j$}, and these values of $a,b$, and obtain that the right hand side in 
		\eqref{eq-ofer1} is bounded by}
	\corO{
		$$ c \frac{(x+k_L^{1/6})(j+1)}{k} e^{-(\kappa k+x-j+k_L^{1/6})^2/2k}
		\leq c (x+k_L^{1/6})(j+1) 2^{-k}e^{\kappa j-\kappa x-\kappa k_L^{1/6}}.$$
	}
	\corO{
		Combining with \eqref{eq-I5.6} and 
		summing over $j$ we get that (\ref{eq: cond on height}) is
		at most $c2^{-k}(x+k_L^{1/6}) e^{-\kappa (x+k_L^{1/6})}$.}
	Thus from (\ref{eq: sum over levels}) 
	\corO{
		\begin{equation}
		\text{(I)}\le c\sum_{k=1}^{L-2}2^{k}\left(2^{-k}
		(x+k_L^{1/6})e^{-\kappa
			x-\kappa k_L^{1/6}}\right)=ce^{-\kappa
			x}\sum_{k=1}^{L}(x+k_L^{1/6})
		e^{-\kappa k_L^{1/6}}\le cx e^{-\kappa x}.\label{eq: barrier first part}
		\end{equation}
	}
	
	By Lemma \ref{lem: disc ray-knight} and a union bound over all leaves \corO{in $\ll_L$ }
	and (\ref{eq: GW curved barrier upper bound})
	we have 
	\corDB{
		\[
		\text{(II)}\le2^{L}P_n\left(\sqrt{2T_{l}}\ge \alpha\left(l\right),l=1,\ldots,L,T_{L}=0\right)\le 
		cxe^{-\kappa x},
		\]
	}
	if $x\ge1$. Together with (\ref{eq: barrier first part}) 
	\corO{and \eqref{eq: upper bound split}}, this proves
	(\ref{eq: right tail in terms of excursions}).
\end{proof}

\corDB{To prove a lower bound in terms of excursion counts we will} consider the processes
\[
T_{l}^{y,k,m}=\begin{array}{c}
\#\text{ of traversals from }\left[y\right]_{l-1}\text{ to }\left[y\right]_{l}\text{ during}\\
\text{the first } m\text{ excursions from }\left[y\right]_{k}\text{ to }\left[y\right]_{k-1}.
\end{array}
\]
Like $\left(T_{l}^{y,n}\right)_{l\ge0}$, the process $\left(T_{l+k}^{y,k,m}\right)_{l\ge0}$
is a critical Galton--Watson process with geometric offspring, but
with initial population $m$ \corDB{i.e.
\begin{equation}\label{eq: subtree trav count law}
	\mbox{the law of } \left(T_{l+k}^{y,k,m}\right)_{l\ge0} \mbox{ under }\mathbb{P} \mbox{ is } P_m.
\end{equation}}
\corDB{Also
\begin{equation}\label{eq: subtree trav count indep}
	\begin{array}{c}
		\left(T_{l+k}^{y,k,m}\right)_{l\ge0} \mbox{ and } \left(T_{l+k}^{z,k,m}\right)_{l\ge0}
		\mbox{ are independent if the}\\
		\mbox{latest common ancestor of leaves $y \ne z$ in the tree is in or above level $k$}.
	\end{array}
\end{equation}}
Note further that
\corDB{
	\begin{equation}\label{eq: subtree trav count compat}
		\mbox{on the event } \corO{T_{k}^{y,n}=m \mbox{ we have } T_{k+l}^{y,n}=T_{k+l}^{y,k,m}} \mbox{ for } l=0,\ldots L-k.
	\end{equation}
}

\corDB{The main step in proving the lower bound on the right tail \eqref{eq: upper tail lower bound} and the upper bound on the left tail \eqref{eq: lower tail bound} is the following result for the processes $T^{y,k,m}_l$.}
\begin{lemma}\label{lem: subtree lower bound}
	There is a constant $c$ such that for all $x \ge 0$ and $k \ge 1$ we have
	\begin{equation}
	\liminf_{L\to\infty}
	\corDB{\mathbb{P}}\left( \inf_{y \in \ll_L} T_{L}^{y,k,m}=0 \right) \ge \frac{2^k (1+x) e^{- x \sqrt{2\log2}}}{2^k (1+x) e^{- \sqrt{2\log2}x } + c}  .\label{eq: to prove lower subtree}
	\end{equation}
	where 
	\[
	\sqrt{2m}=\bar{\kappa}\bar{L} + x, \mbox{ for }
	\bar{L}=L-k, \mbox{ and } \bar{\kappa}= \kappa_{\bar L}  = \sqrt{2\log2}-\frac{1}{\sqrt{2\log2}}\frac{\log\bar{L}}{\bar{L}}.
	\]
\end{lemma}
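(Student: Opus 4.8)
The plan is to reduce \eqref{eq: to prove lower subtree} to a lower bound for a single depth-$\bar L$ subtree, and to prove the latter by a truncated second moment argument whose main inputs are the barrier estimates of Theorem~\ref{thm: GW Barrier}.

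\textbf{Reduction to one subtree.} Partition the leaves $y\in\ll_L$ according to their level-$k$ ancestor $[y]_k$; there are $2^k$ such ancestors. By \eqref{eq: subtree trav count indep} the families $\left(T_l^{y,k,m}\right)_{l\ge k}$ attached to distinct level-$k$ ancestors are independent, while by \eqref{eq: subtree trav count law} together with the branching structure recalled around \eqref{eq: reduced complexity} and \eqref{eq: subtree trav count compat} the family attached to a fixed level-$k$ ancestor is a critical geometric branching Galton--Watson process over a binary tree of depth $\bar L=L-k$ started from population $m$. Hence, writing $p_{\bar L}$ for the probability that some leaf of such a subtree carries traversal count $0$,
\[
\mathbb{P}\left(\inf_{y\in\ll_L}T_L^{y,k,m}=0\right)=1-\left(1-p_{\bar L}\right)^{2^k}\ \ge\ \frac{2^k p_{\bar L}}{1+2^k p_{\bar L}}\,,
\]
where the inequality uses $(1-p)^{2^k}\le e^{-2^kp}\le(1+2^kp)^{-1}$. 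Since $t\mapsto t/(1+t)$ is increasing and $\bar\kappa=\kappa_{\bar L}\to\sqrt{2\log2}$ as $\bar L\to\infty$, it suffices to prove
\[
\liminf_{\bar L\to\infty}p_{\bar L}\ \ge\ c^{-1}(1+x)e^{-x\sqrt{2\log2}}
\]
for a suitable constant $c$; substituting into the previous display yields \eqref{eq: to prove lower subtree}.

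\textbf{Truncated count and first moment.} Fix $\delta\in(0,\sqrt2)$, $\varepsilon\in(0,\tfrac12)$, and constants $C$, $\wt C\ge 2C+2\delta+\eta+\sqrt2$, and set $a=\bar\kappa\bar L$ (so $\sqrt{2m}-a=x$), $b=0$. For a leaf $y$ of the depth-$\bar L$ subtree let $E_y$ be the event that $\left(\sqrt{2T_l^y}\right)_l$ lies in the tube $f_{a,b}(l;\bar L)+Cl_{\bar L}^{1/2-\varepsilon}\le\sqrt{2T_l^y}\le f_{\sqrt{2m},0}(l;\bar L)+\wt Cl_{\bar L}^{1/2+\varepsilon}$ for $l=1,\dots,\bar L-1$, and that $T_{\bar L}^y=0$; put $W=\sum_y\mathbf 1_{E_y}$, so $p_{\bar L}\ge\mathbb{P}(W\ge1)\ge(\mathbb{E}W)^2/\mathbb{E}[W^2]$. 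By Lemma~\ref{lem: disc ray-knight}, $\mathbb{E}W=2^{\bar L}P_m(E)$ with $E$ the single-path version of this event. Theorem~\ref{thm: GW Barrier}~b), applied with ``$L$''$=\bar L$, ``$x$''$=\sqrt{2m}$, ``$y$''$=0$ and this $a,b,\wt C$ — the hypotheses hold since $T_{\bar L}=0\Leftrightarrow\sqrt{2T_{\bar L}}\in H_{0,\delta}$ (as $\delta<\sqrt2$), $0\in\sqrt{2\mathbb{Z}^+}$, $\max(ab,|a-b|)=\bar\kappa\bar L\ge\bar L/\eta$, and $(1+\sqrt{2m}-a)(1+0-0)=1+x\le\eta\bar L$ — gives a lower bound, and the $y\le\sqrt2$ variant of part~a) (with $\sqrt{x/(y\bar L)}$ replaced by $1$, legitimate since $a\ge\bar L/\eta$) gives the matching upper bound. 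Hence, using $\bar\kappa^2\bar L/2=\log2\cdot\bar L-\log\bar L+o(1)$,
\[
\mathbb{E}W\ \asymp\ \frac{1+x}{\bar L}\,2^{\bar L}\,e^{-(\bar\kappa\bar L+x)^2/(2\bar L)}\ =\ (1+x)e^{-\bar\kappa x}(1+o(1)),
\]
so $\liminf_{\bar L}\mathbb{E}W\ge c(1+x)e^{-x\sqrt{2\log2}}$ and $\mathbb{E}W\le C$ uniformly in $x\ge0$.

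\textbf{Second moment, and the main obstacle.} Decompose $W^2=\sum_{j=0}^{\bar L}S_j$, where $S_j=\sum_{y,z}\mathbf 1_{E_y}\mathbf 1_{E_z}$ runs over ordered pairs whose root-to-leaf paths agree exactly up to level $j$ (so $j=\bar L$ corresponds to $y=z$ and $\mathbb{E}S_{\bar L}=\mathbb{E}W$). For $j<\bar L$ there are at most $2^{2\bar L-j}$ such pairs, and by the Markov property at level $j$ followed by the conditional independence of the two continuations,
\[
\mathbb{P}(E_y\cap E_z)=\sum_w P_m\left(\sqrt{2T_j}\in H_{w,1},\ E|_{[1,j-1]}\right)q_j(w)^2,
\]
where $q_j(w)=P_{w^2/2}\!\left(\sqrt{2T_l}\ge f_{\bar\kappa(\bar L-j),0}(l;\bar L-j)-Cl_{\bar L-j}^{1/2-\varepsilon},\ l=1,\dots,\bar L-j-1,\ T_{\bar L-j}=0\right)$ and, on $E_y$, $w=\bar\kappa(\bar L-j)+s$ with $s\in\left[Cj_{\bar L}^{1/2-\varepsilon},\,x+\wt Cj_{\bar L}^{1/2+\varepsilon}\right]$. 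One bounds $q_j(w)$ by the $y\le\sqrt2$ variant of Theorem~\ref{thm: GW Barrier}~a), and the first factor by Theorem~\ref{thm: GW Barrier}~a) when $j\gtrsim\sqrt{2m}/\eta$ and by the crude increment estimate \eqref{eq: tail bound} for smaller $j$ (where the barrier estimates are unavailable for that short initial segment since $\sqrt{2m}\not\le\eta j$). The key cancellation is that, writing $e^{-\bar\kappa^2 s/2}=2^{-s}\bar L^{s/\bar L}(1+o(1))$, the combinatorial factor and the two Gaussian exponents combine as
\[
2^{2\bar L-j}\,e^{-(\bar\kappa j+x-s)^2/(2j)}\,e^{-w^2/(\bar L-j)}\ \asymp\ \bar L^{2-j/\bar L}\,e^{-\bar\kappa x}\,e^{-\bar\kappa s}\,e^{-(x-s)^2/(2j)}\,e^{-s^2/(\bar L-j)},
\]
after which summation over the admissible $s$ (using the lower barrier $s\ge Cj_{\bar L}^{1/2-\varepsilon}$ and the factors $e^{-\bar\kappa s}$, $e^{-s^2/(\bar L-j)}$) and over $j$ gives $\mathbb{E}[W^2]\le C\,\mathbb{E}W$: the $j=0$ term is $\asymp(\mathbb{E}W)^2\le C\mathbb{E}W$, the bulk $\delta_0\bar L\le j\le(1-\delta_0)\bar L$ is negligible because there $s\ge C\bar L^{1/2-\varepsilon}$ forces super-polynomial decay in $\bar L$, and the remaining terms sum by $\sum_{j\ge1}j^{O(1)}e^{-cj^{1/2-\varepsilon}}<\infty$. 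Then $p_{\bar L}\ge(\mathbb{E}W)^2/\mathbb{E}[W^2]\ge c^{-1}\mathbb{E}W\ge c^{-1}(1+x)e^{-x\sqrt{2\log2}}(1+o(1))$, completing the reduction. The main obstacle is exactly this summation over the branching level $j$ — particularly controlling the near-root ($j$ small) and near-leaf ($j$ close to $\bar L$) contributions — which is why the two-sided truncation, with tube width of order $l_{\bar L}^{1/2\pm\varepsilon}$, is built into $E_y$ and into Theorem~\ref{thm: GW Barrier}, and why \eqref{eq: tail bound} must cover the exceptional short segments; the computation mirrors the classical second moment argument for the minimal displacement of a branching random walk, with Theorem~\ref{thm: GW Barrier} replacing the Gaussian barrier estimates.
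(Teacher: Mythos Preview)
Your argument is correct and rests on the same core idea as the paper's proof: a Paley--Zygmund (second moment) inequality applied to a sum of barrier-truncated indicators, with the barrier estimates of Theorem~\ref{thm: GW Barrier} supplying both the first-moment asymptotics and the control of the off-diagonal second-moment terms, decomposed according to the branching level $j$.

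The organization differs in two respects. First, you begin by exploiting the independence across the $2^{k}$ level-$k$ subtrees to reduce to a single depth-$\bar L$ subtree via the elementary inequality $1-(1-p)^{2^{k}}\ge 2^{k}p/(1+2^{k}p)$, and then run the second-moment argument on that single subtree to get $p_{\bar L}\ge c^{-1}(1+x)e^{-\bar\kappa x}$. The paper instead applies Paley--Zygmund directly to the count $\mathcal{N}_{k}$ over \emph{all} $2^{L}$ leaves, and the desired ratio $\mathbb{E}[\mathcal{N}_{k}]/(\mathbb{E}[\mathcal{N}_{k}]+c)$ falls out because the pairs whose most recent common ancestor is at or above level $k$ are independent and contribute at most $(\mathbb{E}\mathcal{N}_{k})^{2}$, while the ``branch late'' pairs contribute at most $c\,\mathbb{E}\mathcal{N}_{k}$; the $2^{k}$ in the final answer is already baked into $\mathbb{E}\mathcal{N}_{k}\asymp 2^{k}(1+x)e^{-\bar\kappa x}$. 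Your reduction is a clean and modular way to arrive at the same conclusion. Second, for the truncation you use the full two-sided tube of Theorem~\ref{thm: GW Barrier}~b), whereas the paper defines $I_{y}$ only with the one-sided lower barrier $\bar\kappa(\bar L-l)+l_{\bar L}^{1/4}$; the tube is only needed to invoke part~b) for the lower bound on the first moment, and in the second-moment step both proofs drop the upper constraint and use part~a). Either choice works; the paper's one-sided event makes the upper bounds for $q_{j}$ and $r_{j}$ marginally simpler to state.
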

\begin{proof}
	Consider the event 
	\corO{
		\[
		I_{y}=\left\{ \bar{\kappa}\left(\bar{L}-l\right)+\left(l_{\bar L}\right)^{1/4}\le\sqrt{2T_{k+l}^{y,k,m}}\text{ for }l=1,\ldots,\bar{L}-1,T_{k+\bar{L}}^{y,k,m}=0\right\} ,
		\]}
	that $\left(T_{l+k}^{y,k,m}\right)_{l\ge0}$ stays above \corDB{the barrier $\bar{\kappa}\left(\bar{L}-l\right)+\left(l_{\bar L}\right)^{1/4}$}
	and ends up at zero, and the counting random variable
	\[
	\mathcal{N}_{k}=\sum_{y\corO{\in \ll_L}}1_{\corO{I_y}}.
	\]
	If $\mathcal{N}_{k}\ge1$ then $\min_{y\corO{\in \ll_L}
	}T_{L}^{y,k,m}=0$ so to prove \eqref{eq: to prove lower subtree} it suffices to show that for large enough $L$ \nc
	\begin{equation}
	\corDB{\mathbb{P}}\left(\mathcal{N}_{k}\ge1\right)
	\ge \frac{2^k (1+x) e^{-\bar{\kappa} x}}{2^k (1+x) e^{-\bar{\kappa}x}+c}.\label{eq: to prove lower bound count}
	\end{equation}
	Using (\ref{eq: GW curved barrier upper bound}),
	(\ref{eq: gw lower bound}) one has
	\[
	\corDB{\mathbb{E}}\left(\mathcal{N}_{k}\right)\asymp2^{L}2^{-\bar{L}} (1+x) e^{-\bar{\kappa} x } = 2^{k} (1+x) e^{-\bar{\kappa}x},
	\]
	and by the Paley-Zygmund inequality
	\begin{equation}
	\corDB{\mathbb{P}}\left(\mathcal{N}_{k}\ge1\right)\ge\frac{
		\corDB{\mathbb{E}}\left(\mathcal{N}_{k}\right)^{2}}{
		\corDB{\mathbb{E}}
		\left(\mathcal{N}_{k}^{2}\right)}.\label{eq: paley zygmund}
	\end{equation}
	To prove (\ref{eq: to prove lower bound count}) we estimate the second
	moment of $\mathcal{N}_{k}$. We have
	\[
	\begin{array}{ccl}
	\corDB{\mathbb{E}}\left(\mathcal{N}_{k}^{2}\right) & = & \sum_{y,z}
	\corDB{\mathbb{P}} \left(I_{y}\cap I_{z}\right)\\
	& = & {\displaystyle \sum_{y,z:\text{ branch early}}}
	\corDB{\mathbb{P}}\left(I_{y}\right)
	\corDB{\mathbb{P}}\left(I_{z}\right)+{\displaystyle \sum_{y,z:\text{ branch late}}}
	\corDB{\mathbb{P}}\left(I_{y}\cap I_{z}\right),
	\end{array}
	\]
	where the first sum is over pairs of leaves whose most recent common
	ancestor lie in level $k$ of the tree or above, so that the events
	$I_{y}$ and $I_{z}$ are independent, and the second sum is over
	all other pairs of leaves. The first sum is at most $2^{2L}
	\corDB{\mathbb{P}}\left(I_{y}\right)^2=
	\corDB{\mathbb{E}}\left(\mathcal{N}_{k}\right)^{2}$.
	Thus (\ref{eq: paley zygmund}) implies (\ref{eq: to prove lower bound count})
	once we have shown that \corDB{for large enough $L$}
	\begin{equation}
	\sum_{y,z:\text{ branch late}}
	\corDB{\mathbb{P}}\left(I_{y}\cap I_{z}\right) \le c2^{k} (1+x) e^{ -\bar{\kappa} x}.\label{eq: to prove non-early branching}
	\end{equation}
	
	
	To bound this sum we sum over the possible level of the common ancestor
	of $y,z$:
	\begin{equation}
	\sum_{y,z:\text{ branch late}}
	\corDB{\mathbb{P}}\left(I_{y}\cap I_{z}\right)
	\le2^{k}\sum_{j=0}^{\bar{L}}2^{2\bar{L}-j}p_{j},\label{eq: sum over late branching}
	\end{equation}
	where $p_{j}=
	\corDB{\mathbb{P}}\left(I_{y}\cap I_{z}\right)$ for $y,z$ whose
	common ancestor is in level $k+j$ of the tree and we have used that
	there are at most $2^{k}2^{2\bar{L}-j}$ \corO{such pairs.} Now
	by conditioning on the value of the processes at the point where they
	branch, we have
	\corO{
		\begin{equation}
		p_{j}\leq 
		\sum_{u\ge0}q_{j}\left(u\right)r_{j}\left(u\right)^{2},\label{eq: pj as sum}
		\end{equation}
	}
	\corDB{where we sum over positive integers $u$ and where}
	\corDB{
		\[
		q_{j}\left(u\right)=P_{m}\left(\bar{\kappa}\left(\bar{L}-l\right)
		\le\sqrt{2T_{l}},l=1,\ldots,j-1,\sqrt{2T_{j}}\in H_{m_u}
		\right),
		\]
		and
		\begin{eqnarray*}
			&&r_{j}\left(u\right)\\
			&&
			\!\!\!\!\!\!\!
			\!\!\!\!\!\!\!
			\!\!\!\!\!\!\!
		=	\max_{z\in H_{m_u}}
			P_{\frac{z^2}{2}}\left(\bar{\kappa}\left(\bar{L}-j-l\right)
			\le\sqrt{2T_{l}},l=1,\ldots,\bar{L}-j-1,T_{\bar{L}-j}=0\right),
		\end{eqnarray*}
	}
	with
	\[
	m_{u}=\bar{\kappa}\left(\corO{\bar L}-j\right)+j_{\bar L}^{
		\corO{1/4}}+u.
	\]
	By \corO{\eqref{eq: GW curved barrier upper bound}},
	\corO{
		\[
		q_{j}\left(u\right)\le c\frac{1}{j}\corDB{\left(1+x\right)}
		\corO{\left(1+j_{\bar L}^{\corO{1/4}}+u\right)}e^{-\frac{\left(j\bar{\kappa} + \corDB{x} -
				j_{\bar L}^{\corO{1/4}}-u\right)^{2}}{2j}},
		\]
	}
	and
	\corO{
		\[
		r_{j}\left(u\right)\le c\frac{1}{\bar{L}-j} \left(1+j_{\bar L}^{\corO{1/4}}+u\right)e^{-\frac{\left(\left(\bar{L}-j\right)\bar{\kappa}+
				j_{\bar L}^{\corO{1/4}}+u\right)^{2}}{2\left(\bar{L}-j\right)}}.
		\]
	}
	Since
	\corO{
		\begin{eqnarray*}
			&&\frac{\left(j\bar{\kappa} + \corDB{x} - j_{ \bar L}^{1/4}-u\right)^{2}}{2j}+
			2\frac{\left(\left(\bar{L}-j\right)\bar{\kappa}+j_{\bar L}^{1/4}+u\right)^{2}}{2\left(\bar{L}-j\right)}\\
			&&\;\ge\frac{\bar{\kappa}^{2}}{2}\left(2\bar{L}-j\right) + \corDB{\bar{\kappa}x} +cj_{\bar L}^{\corO{1/4}}+cu,
		\end{eqnarray*}
	}
	and $e^{-\frac{\bar{\kappa}^{2}}{2}\left(2\bar{L}-j\right)}=\bar{L}^{\frac{2\bar{L}-j}{\bar{L}}}2^{-\left(2\bar{L}-j\right)} \le \bar{L}(\bar L-j) 2^{-2 (\bar L - j)}$
	we thus have from (\ref{eq: pj as sum}) that
	\corO{
		\[
		p_{j}\le c2^{-\left(2\bar{L}-j\right)} \bar{L}(\bar L-j) \frac{\corO{
				1
			}}{j\left(\bar{L}-j\right)^{2}} \corDB{\left( 1+x \right)} \left(1 + \corDB{x} + j_{\bar L}^{1/4}\right)^3 e^{ -\corDB{\bar{\kappa}x} - c j_{\bar L}^{1/4}.}
			\]
		}
		Thus the right-hand side of (\ref{eq: sum over late branching}) is
		at most
		\corO{
			\[
			2^{k}\sum_{j=1}^{\bar{L}}\frac{\bar L}{j\left(\bar{L}-j\right)}
			\corDB{\left( 1+x \right)} \left(1+j_{\bar L}^{1/4}\right)^3 e^{ -\corDB{\bar{\kappa}x} - cj_{\bar L}^{1/4}}.
			\]
		}
		Since the sum is bounded by a constant for all $\bar{L}$ we get that
		\[
		\sum_{y,z:\text{ branch late}}
		\corDB{\mathbb{P}}\left(I_{y}\cap I_{z}\right)\le c2^{k} \corDB{(1+x)e^{-\bar{\kappa}x}}, 
		\]
		which completes the proof of (\ref{eq: to prove non-early branching}),
		and therefore also of (\ref{eq: to prove lower bound count}) and
		(\ref{eq: lower bound in terms of excursions}).
	\end{proof}
	\corDB{By simply setting $k=0$ and bounding $x e^{ - x\sqrt{2\log2} }$ by a constant we obtain a lower bound on the right tail in terms of excursions. This will later lead to \eqref{eq: upper tail lower bound}.
	\begin{corollary}
		For all $x \ge 1$,
		\begin{equation}
		\liminf_{L\to\infty}
		\corDB{\mathbb{P}}\left( \inf_{y \in \ll_L} T_{L}^{y,n}=0 \right) \ge c x e^{- x \sqrt{2\log2}},\label{eq: right tail lower bound exc}
		\end{equation}
		where $\sqrt{2 n} = \kappa L + x$.		
	\end{corollary}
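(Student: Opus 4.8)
The plan is to obtain this corollary as the $k=0$ instance of Lemma~\ref{lem: subtree lower bound}, followed by an elementary simplification of the right-hand side valid for $x\ge 1$. First I would spell out what $k=0$ gives: then $\bar L=L$, $\bar\kappa=\kappa_{\bar L}=\kappa_L=\kappa$, and the normalization $\sqrt{2m}=\bar\kappa\bar L+x$ becomes $\sqrt{2m}=\kappa L+x=\sqrt{2n}$, so $m=n$. I would also note that for $k=0$ the auxiliary process is the original excursion count: since the walk starts at $\rho$, a vertex of degree one, it traverses the edge $\rho\to o$ exactly once in each excursion from $\rho$, so $T_0^{y,n}=n$ deterministically, and \eqref{eq: subtree trav count compat} with $k=0$ then gives $T_l^{y,n}=T_l^{y,0,n}$ for every leaf $y\in\ll_L$ and every $l\le L$, $\mathbb{P}$-almost surely. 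In particular the events $\{\inf_{y\in\ll_L}T_L^{y,0,n}=0\}$ and $\{\inf_{y\in\ll_L}T_L^{y,n}=0\}$ coincide, so Lemma~\ref{lem: subtree lower bound} (whose second-moment proof applies verbatim at $k=0$) yields
\[
\liminf_{L\to\infty}\mathbb{P}\Bigl(\inf_{y\in\ll_L}T_L^{y,n}=0\Bigr)\ \ge\ \frac{(1+x)\,e^{-x\sqrt{2\log 2}}}{(1+x)\,e^{-x\sqrt{2\log 2}}+c}.
\]

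To finish, I would bound this ratio from below. The function $t\mapsto(1+t)e^{-t\sqrt{2\log 2}}$ is bounded on $[0,\infty)$ by some absolute constant $M$, so the denominator is at most $M+c$, while the numerator is at least $x\,e^{-x\sqrt{2\log 2}}$ because $1+x\ge x$. Hence the right-hand side is at least $(M+c)^{-1}x\,e^{-x\sqrt{2\log 2}}$, which is exactly \eqref{eq: right tail lower bound exc} after renaming the constant; the restriction $x\ge 1$ plays no role beyond matching the stated form, and could be relaxed.

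There is no genuine difficulty here: the statement is pure bookkeeping built on Lemma~\ref{lem: subtree lower bound}. The only points deserving a line of justification are the identity $T_L^{y,0,n}=T_L^{y,n}$ (handled above via \eqref{eq: subtree trav count compat} and the deterministic value of $T_0^{y,n}$) and the remark that the second-moment argument of Lemma~\ref{lem: subtree lower bound} remains valid at $k=0$, where the family of pairs with common ancestor strictly above level $0$ is empty and only pairs branching at level $0$ or below need the full estimate. If one preferred to quote Lemma~\ref{lem: subtree lower bound} exactly as stated (for $k\ge 1$), one could instead take $k=1$, use the one-step estimates to control $T_1^{y,n}$ near the prescribed initial population $m$ on an event of positive probability, and compare $\{\inf_y T_L^{y,n}=0\}$ with $\{\inf_y T_L^{y,1,m}=0\}$; but the $k=0$ route is cleaner and is the one indicated.
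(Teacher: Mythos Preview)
Your proposal is correct and matches the paper's approach exactly: the paper's proof is the single sentence ``by simply setting $k=0$ and bounding $x e^{-x\sqrt{2\log 2}}$ by a constant,'' and you have fleshed this out carefully, including the identification $T_l^{y,0,n}=T_l^{y,n}$ via \eqref{eq: subtree trav count compat} and the observation that the second-moment argument of Lemma~\ref{lem: subtree lower bound} goes through at $k=0$. Your alternative route via $k=1$ is unnecessary but harmless to mention.
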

	}

	A lower bound \corDB{for the cover time} in terms of excursions, \corDB{which will later lead to \eqref{eq: lower tail bound}} is given by the following proposition.
	\begin{proposition}
		\label{prop: Lower Bound cover}There is a constant $c$ such for
		all $x\ge0$,
		\begin{equation}
		\liminf_{L\to\infty}
		\corDB{\mathbb{P}}\left(\min_{\corO{y\in \ll_L}}T_{L}^{y,n}=0\right)\ge\corO{1-} ce^{-cx},\label{eq: lower bound in terms of excursions}
		\end{equation}
		where
		\corO{
			$\sqrt{2n}=\kappa L-x$.}
	\end{proposition}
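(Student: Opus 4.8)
The plan is to condition the family $\{(T^{y,n}_l)_{l\le L}\}_{y\in\ll_L}$ on the traversal counts $(T^{v,n}_k)_{v\in\ll_k}$ at a level $k=k(x)$ growing linearly in $x$. Below level $k$ the tree splits into $2^k$ subtrees of depth $\bar L:=L-k$ which, conditionally on these counts, carry \emph{independent} copies of the branching process, by \eqref{eq: subtree trav count law}, \eqref{eq: subtree trav count indep} and \eqref{eq: subtree trav count compat}. Each such subtree, provided the count sitting above it is not too large, will contain an uncovered leaf with probability bounded below by a fixed constant; since there are $2^k$ independent attempts and $2^k$ is exponential in $x$, the probability that they all fail is doubly exponentially small in $x$, hence far below $ce^{-cx}$.

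Concretely, fix a small constant $c_*\in\big(0,1/(2\sqrt{2\log2})\big)$, set $k=\lfloor c_* x\rfloor$, $\bar\kappa=\kappa_{\bar L}$, and $m=\lceil\tfrac12(\bar\kappa\bar L+1)^2\rceil$, so $\sqrt{2m}-\bar\kappa\bar L\in[1,2]$ for $L$ large. Let $A_v=\{\inf_{y:\,[y]_k=v}T^{y,n}_L=0\}$ for $v\in\ll_k$. By the cited Markovian/branching identities, conditionally on $(m_v)_{v\in\ll_k}:=(T^{v,n}_k)_{v\in\ll_k}$ the events $A_v$ are independent with $\mathbb{P}(A_v\mid (m_v)_v)=p(m_v)$, where $p(\mu)$ is the probability that a depth-$\bar L$ branching Galton--Watson tree with initial population $\mu$ has a leaf whose count at level $\bar L$ vanishes — precisely the quantity estimated by the corollary following Lemma~\ref{lem: subtree lower bound}, read for a tree of depth $\bar L$. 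Pathwise domination of $P_{\mu'}$ over $P_\mu$ for $\mu'\ge\mu$ (superposition of independent Galton--Watson processes) shows $p$ is non-increasing, and the corollary, applied with its parameter $\sqrt{2m}-\bar\kappa\bar L\in[1,2]$, gives $p(m)\ge c_0>0$ for $L$ large. Hence on $\mathcal{G}:=\{m_v\le m\ \forall v\in\ll_k\}$ we have $p(m_v)\ge c_0$ for every $v$, so
\[
\mathbb{P}\big(\min_{y\in\ll_L}T^{y,n}_L=0\ \big|\ (m_v)_v\big)\ \ge\ 1-\prod_{v\in\ll_k}\big(1-p(m_v)\big)\ \ge\ 1-(1-c_0)^{2^k}
\]
on $\mathcal{G}$, and consequently $\mathbb{P}(\min_{y}T^{y,n}_L=0)\ge\big(1-(1-c_0)^{2^k}\big)\big(1-\mathbb{P}(\mathcal{G}^c)\big)$.

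It remains to bound $\mathbb{P}(\mathcal{G}^c)$. Writing $\Theta:=\sqrt{2m}-\sqrt{2n}$, the identity $\kappa_L L-\kappa_{\bar L}\bar L=\sqrt{2\log2}\,k-\tfrac1{\sqrt{2\log2}}\log\tfrac{L}{\bar L}$ gives $\Theta=x-\sqrt{2\log2}\,k+1+o(1)$, which is $\ge\tfrac12 x$ for $L$ large by the choice of $c_*$. A union bound over $\ll_k$ together with the sub-Gaussian increment estimate \eqref{eq: tail bound} (with $l=k$, $T_0=n$) then give
\[
\mathbb{P}(\mathcal{G}^c)\ \le\ 2^k\,\mathbb{P}\big(\sqrt{2T^{y,n}_k}-\sqrt{2n}>\Theta\big)\ \le\ c\,e^{\,k\log2-\Theta^2/(2k)},
\]
and since $c_*<1/(2\sqrt{2\log2})$ one checks that $k\log2-\Theta^2/(2k)\le -cx$ for some $c>0$ (the inequality is tight exactly at the ``frontier'' value $c_*=1/(2\sqrt{2\log2})$). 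Thus $\mathbb{P}(\mathcal{G}^c)\le ce^{-cx}$; combined with $2^k\ge c_1e^{(c_*\log2)x}$, which makes $(1-c_0)^{2^k}$ negligible next to $e^{-cx}$, we obtain $\mathbb{P}(\min_y T^{y,n}_L=0)\ge 1-ce^{-cx}$ for all large $L$ and all $x$ above a fixed threshold; for bounded $x$ the same argument with $k=1$ gives a uniform positive lower bound, which covers the (weak) claim in that range.

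The crux — and essentially the only place that requires quantitative care — is the bound $\mathbb{P}(\mathcal{G}^c)\le ce^{-cx}$: the conditioning level $k$ must be linear in $x$ so that the $2^k$ independent subtree attempts swamp every error term, yet small enough that the extremal particle of the branching structure $(\sqrt{2T^{y,n}_k})_y$ at generation $k$, which lies at height $\approx\sqrt{2n}+\sqrt{2\log2}\,k$, stays comfortably below the threshold $\sqrt{2m}=\bar\kappa\bar L+1+o(1)$. This tension is what confines $c_*$ to $(0,1/(2\sqrt{2\log2}))$. Everything else — the conditional independence of the subtrees, monotonicity of $p$ in the initial population, and matching the centering $\bar\kappa\bar L$ and depth $\bar L=L-k$ when invoking the corollary — is routine.
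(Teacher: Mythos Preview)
Your proof is correct and follows the same strategy as the paper: choose a level $k$ linear in $x$, control $\mathbb{P}(\max_v T_k^{v,n}>m)$ by a union bound together with \eqref{eq: tail bound}, and invoke the subtree lower bound for what happens below level $k$. The only difference is organizational: you apply the corollary (Lemma~\ref{lem: subtree lower bound} with $k=0$, read at depth $\bar L$) to each of the $2^k$ subtrees separately and combine via conditional independence to get $1-(1-c_0)^{2^k}$, whereas the paper applies Lemma~\ref{lem: subtree lower bound} directly with the chosen $k$ and $x=0$ to obtain $\mathbb{P}(\inf_y T_L^{y,k,m}=0)\ge 2^k/(2^k+c)\ge 1-c2^{-k}$ in one stroke.
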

	\begin{proof}
		\corO{We will fix $k=k(x)$ below. } \corDB{ From \eqref{eq: subtree trav count compat} it follows that}
		\begin{equation}
		\corO{T_{k}^{y,n}}\le m\text{ and }T_{L-k}^{y,k,m}=0\implies 
		\corO{T_{L}^{y,n}}=0.\label{eq: split tree property}
		\end{equation}
		We consider the processes $\left(T_{k+l}^{y,k,m}\right)_{l=0,\ldots,\bar{L}}$
		where $\bar{L}=L-k$. Using (\ref{eq: tail bound}) we have for any $k,a,L\ge1$ that
		\[
		\corDB{\mathbb{P}}\left(\max_{y \in \ll_L }\sqrt{2T_{k}^{y,n}}>\left(\kappa L-x\right)+\kappa k+a\right)\le2^{k}\corO{e^{-\frac{\left(\kappa k+a\right)^{2}}{2k}}\le 
			c \corD{L^{k/L}} e^{-\kappa 
				a}},
		\]
		\corDB{by a union bound over the $2^k$ vertices in level $k$ of the tree, which is justified by \eqref{eq: reduced complexity}.} 
		Setting $k=cx$ and $a=x-2\kappa k-\corO{\frac{k}{c}\frac{\log L}{\bar L}}$
		one has $\bar{\kappa}\bar{L}\ge\corO{\kappa L-x}+\kappa \corO{k}+a$
		and $a\ge cx$ for small enough $c$ and large enough $L$, so that
		\begin{equation}
			\corD{\limsup_{L\to\infty}}\, 
			\corDB{\mathbb{P}}\left(\max_{\corO{y\in \ll_L}}\sqrt{2T_{k}^{y,n}}>\bar{\kappa}\bar{L}\right)\le \corO{ce^{-cx}}.\label{eq: top tree}
		\end{equation}
		Setting $\sqrt{2 m} = \bar{\kappa} \bar{L}$ and using Lemma \ref{lem: subtree lower bound} with $0$ in place of $x$ it follows that
		\begin{equation}
			\liminf_{L\to\infty}
			\corDB{\mathbb{P}}\left( \inf_{y \in \ll_L} T_{L}^{y,k,m}=0 \right) \ge \frac{2^k}{2^k + c} \ge 1 - c2^{-k}.\label{eq: to prove lower subtree endproof}
		\end{equation}
		\corDB{Together with \eqref{eq: top tree} this implies
		$$
			\begin{array}{l}
				\liminf_{L \to \infty}\mathbb{P}\left( \left\{\forall y \in \ll_L \mbox{ } T^{y,k}_k \le m  \right\} \cap \left\{ \exists y \in \ll_L \mbox{ s.t. } T_{L}^{y,k,m}=0 \right\}  \right) \\
				\ge 1 - c2^{-k} - c e^{-cx},
			\end{array}
		$$
		which implies \eqref{eq: lower bound in terms of excursions}, because of (\ref{eq: split tree property}) and since $k=cx$.}
	\end{proof}

	We now relate excursion time to real time to derive Theorem \ref{thm: tree cover main}
	from Proposition \ref{prop: Upper Bound cover} and Proposition \ref{prop: Lower Bound cover}.
	Note that 
	\[
	D_{n}=S_{1}+S_{2}+\ldots+S_{n},
	\]
	where $S_{i}$ is the length of the $i$-th excursion from the root.
	By the strong Markov property the $S_{i},i\ge1,$ are iid. Elementary
	1D random walk computations show that $
	\corDB{\mathbb{E}}\left(S_{i}\right)=2^{L+2}-2$
	and by Khasminskii's lemma (a consequence of Kac's moment formula,
	see (6) \corDBa{\cite{FitzsimmonsKac}} we have
	$
	\corDB{\mathbb{E}}\left(S_{i}^{k}\right)\le k!
	\corDB{\mathbb{E}}\left(S_{i}\right)^{k}$.
	Thus $
	\corDB{\mathbb{E}}\left(S_{i}^{2}\right)\le c2^{2L}<\infty$, so that
	by the central limit theorem $\frac{D_{n}-n
		\corDB{\mathbb{E}}
		\left(S_{1}\right)}{\sqrt{n}\sqrt{\text{Var}\left(S_{1}\right)}}$
	converges to a normal distribution as $n\to\infty$. Also $
	\corDB{\mathbb{E}}\left(S_{i}^{3}\right)\le c2^{3L}\le c\left(\sqrt{\text{Var}\left(S_{i}\right)}\right)^{2}$
	(it is easily seen that $\text{Var}\left(S_{i}\right)\ge c2^{2L}$),
	so that by the Berry-Essen theorem in addition
	\begin{equation}
	\sup_{x\in\mathbb{R}}\left|
	\corDB{\mathbb{P}}\left(\frac{D_{n}-n
		\corDB{\mathbb{E}}\left(S_{1}\right)}{\sqrt{n}\sqrt{\text{Var}\left(S_{1}\right)}}\le x\right)-\Phi\left(x\right)\right|\le\frac{c}{\sqrt{n}},\label{eq: Berry Esseen}
	\end{equation}
	\corDBa{uniformly in $L$.} We now prove the estimate Theorem \ref{thm: tree cover main} for
	the cover time.
	
	\begin{proof}[Proof of Theorem \ref{thm: tree cover main}]
		For the upper bound, let
		\[
		\sqrt{2n}=\sqrt{2\log2}L-\frac{1}{\sqrt{2\log2}}\log L+\frac{x}{2}.
		\]
		Because of (\ref{eq: hitting time traversal count connection}) we
		have
		\begin{equation}
		\begin{array}{l}
		\corDB{\mathbb{P}}\left(C_{L}>2^{L+1}\left(\sqrt{2\log2}L-\frac{1}{\sqrt{2\log2}}\log L+x\right)^{2}\right)\\
		\le
		\corDB{\mathbb{P}}\left(D_{n}>2^{L+1}\left(\sqrt{2\log2}L-\frac{1}{\sqrt{2\log2}}\log L+x\right)^{2}\right)+
		\corDB{\mathbb{P}}\left(\min_{y\corO{\in \ll_L}
		}T_{L}^{y,n}=0\right).
		\end{array}\label{eq: proof of upper bound split}
		\end{equation}
		Since 
		\begin{eqnarray*}	
		&&2^{L+1}\left(\sqrt{2\log2}L-\frac{1}{\sqrt{2\log2}}\log L+x\right)^{2}\\
		&&\quad 
		=2^{L+2}L\left(\left(\log2\right)L-\log L + \sqrt{2 \log 2}x+o\left(1\right)\right),
	      \end{eqnarray*}	
		and
		\[
		n=L\left(\left(\log2\right)L-\log L+\sqrt{2 \log 2}\frac{x}{2}+o\left(1\right)\right),
		\]
		we have from (\ref{eq: Berry Esseen}) that 
		\[
		\corDB{\mathbb{P}}\left(D_{n}>2^{L+1}\left(\sqrt{2\log2}L-\frac{1}{\sqrt{2\log2}}\log L+\sqrt{2 \log 2}x\right)^{2}\right)\le ce^{-cx^{2}}+\frac{c}{\sqrt{n}},
		\]
		and combining this with Proposition \ref{prop: Upper Bound cover} we get the claim
		(\ref{eq: upper tail bound}) from (\ref{eq: proof of upper bound split}). Since the right-hand side tends to zero as $x\to\infty$ this proves
		the upper bound \corDB{on the right tail \eqref{eq: upper tail bound}. The lower bound \eqref{eq: upper tail lower bound} on the right tail follows similarly from \eqref{eq: right tail lower bound exc} and \eqref{eq: Berry Esseen}.} 
		
		For the bound \corDB{\eqref{eq: lower tail bound} on the left tail} let
		\[
		\sqrt{2n}=\sqrt{2\log2}L-\frac{1}{\sqrt{2\log2}}\log L-\frac{x}{2}.
		\]
		We have that
		\[
		\begin{array}{l}
		\corDB{\mathbb{P}}\left(C_{L} \le 2^{L+1}\left(\sqrt{2\log2}L-\frac{1}{\sqrt{2\log2}}\log L-x\right)^{2}\right)\\
		\le
		\corDB{\mathbb{P}}\left(D_{n}\le2^{L+1}\left(\sqrt{2\log2}L-\frac{1}{\sqrt{2\log2}}\log L-x\right)^{2}\right)+
		\corDB{\mathbb{P}}\left(\min_{y\corO{\in \ll_L}
		}T_{L}^{y,n}>0\right).
		\end{array}
		\]
		Similarily to above (\ref{eq: Berry Esseen}) implies that is at most
		$ce^{-x^{2}}$, and the second term is at most $ce^{-cx}$ by Proposition \ref{prop: Lower Bound cover},
		which implies (\ref{eq: lower tail bound}).
	\end{proof}

\bibliographystyle{plain}

\bigskip
\noindent
\begin{tabular}{lll} & David Belius   \\
& Institute of Mathematics\\
&University of Z\"{u}rich\\
&CH-8057 Z\"{u}rich, Switzerland\\
& david.belius@cantab.net\\ 
& &\\
& & \\
& Jay Rosen\\
& Department of Mathematics\\
&  College of Staten Island, CUNY\\
& Staten Island, NY 10314 \\
& jrosen30@optimum.net\\
& &\\
& & \\
 & Ofer Zeitouni\\
& Faculty of Mathematics, 
 Weitzmann Institute and\\
 &Courant Institute, NYU\\
& Rehovot 76100, Israel and NYC, NY 10012 \\
& ofer.zeitouni@weizmann.ac.il
\end{tabular}

\end{document}